\numberwithin{equation}{section}
\newtheorem{thm}{Theorem}[section]
\newtheorem{lem}{Lemma}[section]
\newtheorem{rem}{Remark}[section]
\newtheorem{defn}{Definition}[section]
\newtheorem{example}{Example}[section]
\newcommand{\eps}{\varepsilon}
\newcommand{\ed}{\end {document}}
\newcommand{\ka}{\kappa}
\begin{document}

\title{On symmetry breaking of Allen--Cahn}

\author[D. Li]{ Dong Li}
\address{D. Li, Department of Mathematics, the Hong Kong University of Science \& Technology, Clear Water Bay, Kowloon, Hong Kong}
\email{mpdongli@gmail.com}

\author[C.Y. Quan]{Chaoyu Quan}	
\address{C.Y. Quan, SUSTech International Center for Mathematics, Southern University of Science and Technology,
	Shenzhen, P.R. China}
\email{quancy@sustech.edu.cn}

\author[T. Tang]{Tao Tang}
\address{T. Tang, Guangdong Provincial Key Laboratory of Computational Science and Material Design, Southern University of Science and Technology, Shenzhen, P.R. China; and Division of Science and Technology, BNU-HKBU United International College, Zhuhai, P.R. China}
\email{ttang@uic.edu.cn}

\author[W. Yang]{Wen Yang}
\address{\noindent W. Yang,~Wuhan Institute of Physics and Mathematics, Chinese Academy of Sciences, P.O. Box 71010, Wuhan 430071, P. R. China; Innovation Academy for Precision Measurement Science and Technology, Chinese Academy of Sciences, Wuhan 430071, P. R. China.}
\email{wyang@wipm.ac.cn}

\begin{abstract}
This paper is concerned with numerical solutions for the Allen-Cahn equation with standard double well potential and periodic boundary conditions. Surprisingly it is found that using standard numerical discretizations with high precision computational
solutions may converge  to completely incorrect steady
states.  This happens for very smooth initial data and state-of-the-art algorithms.
We analyze this phenomenon and showcase the resolution of this problem by a new
symmetry-preserving filter technique.
We develop a new theoretical framework and rigorously prove the convergence to steady states for the filtered solutions.
\end{abstract}

\maketitle
\section{Introduction}
{\bf A curious experiment}.
Consider the following 1D Allen--Cahn on the periodic torus $\mathbb T=[-\pi, \pi]$:
\begin{align}
\label{1.ac}
\begin{cases}
\partial_t u = \ka^2 \partial_{xx} u  - f(u),  \\
u\Bigr|_{t=0} = u_0,
\end{cases}
\end{align}
where $f(u) = u^3 - u$ corresponds to the usual double-well potential, $\kappa^2>0$ is the diffusion coefficient.
The scalar function $u: \; \mathbb T\to \mathbb R$ represents the
concentration of a phase in an alloy and typically has values in the physical range $[-1, 1]$. {There
is by now an extensive literature on the theoretical analysis and numerical simulation of the
Allen-Cahn equation and related phase field models (cf. \cite{aac2001,ac2000,bbg2000,bfrw1997,bhm2000,dkw2011,wang2017}).} 
For the system \eqref{1.ac} we take the initial data $u_0$ as an odd function of $x$
(here we tacitly ``lift" the periodic function $u$ to be defined on the whole
real axis so that oddness can be defined in the usual way). For example, one can take
$u_0(x) = \sin(x)$.
Denote
\begin{align}
u_{\infty}(x) = \lim_{t\to \infty} u(x,t).
\end{align}
Since $u_0$ is odd and smooth, it is clear that the odd symmetry should be preserved for all time.
In particular, the final state $u_{\infty}$ must be a periodic odd function of $x$.
However  to our surprise, standard  numerical experiments show that this parity property may be
lost in not very long time simulations.
For example, by using the finite difference method with $N_{p} = 128$ nodes for space discretization and the first-order implicit-explicit method with time step $\tau = 0.01$ for time discretization, we
 find that $u$ tends to $\pm 1$ when $t$ is suitably large (see the left-hand side of Figure \ref{fig:intro1}).
Obviously, $\pm 1$ are not the correct steady states as the oddness is not preserved.

To check the fidelity of the numerical scheme and rule out the issues connected with inaccurate numerical discretization, we first test larger $N = 4096$ and smaller $\tau  = 10^{-5}$.
It turns out that for this case, the oddness is preserved up to around $t\approx 32$,
after which the solution  $u$ lost its oddness apparently and tends to $1$ quickly. See the right-hand side of Figure \ref{fig:intro1}.

\vskip .3cm
{\bf How about other numerical schemes?}
We test different first and second order (in time) numerical schemes to find that all these methods suffer the same non-physical phenomenon, see Figure \ref{fig:intro2}.  This appears to be quite
a serious issue, since it shows that all these standard numerical implementations may not
be credible in not very long time simulations.

\begin{figure}[!h]
\centering
(a) \includegraphics[width=0.43\textwidth]{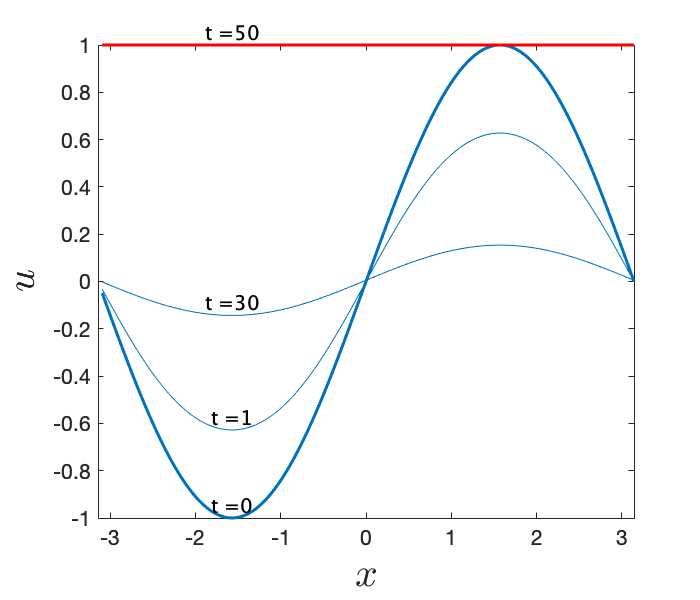}
(b) \includegraphics[width=0.43\textwidth]{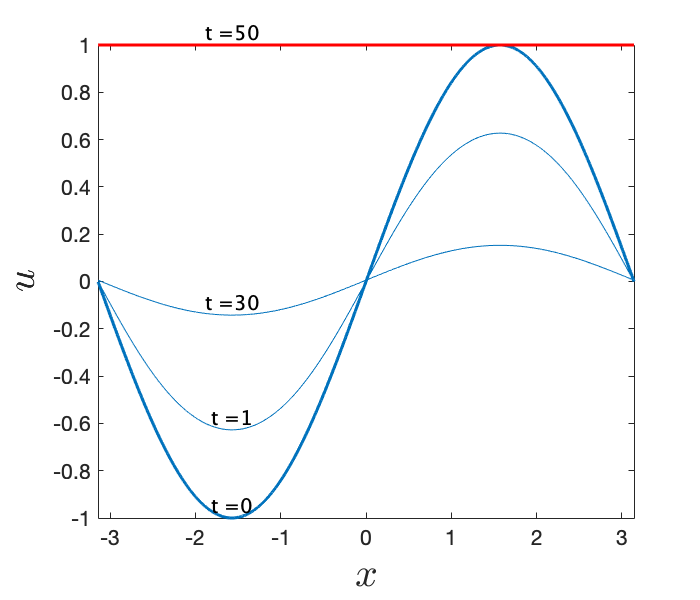}
\caption{\small Dynamics of the Allen-Cahn equation using the first-order implicit-explicit scheme with different discretization parameters. (a): $\ka=1$, $N_p = 128$ and $\tau= 0.01$, and (b): $\ka=1$, $N_p = 4096$ and $\tau=10^{-5}$. }\label{fig:intro1}
\end{figure}

\begin{figure}[!h]
\centering
\includegraphics[trim=1in 0.4in 1in 0.in,clip,width=0.99\textwidth]{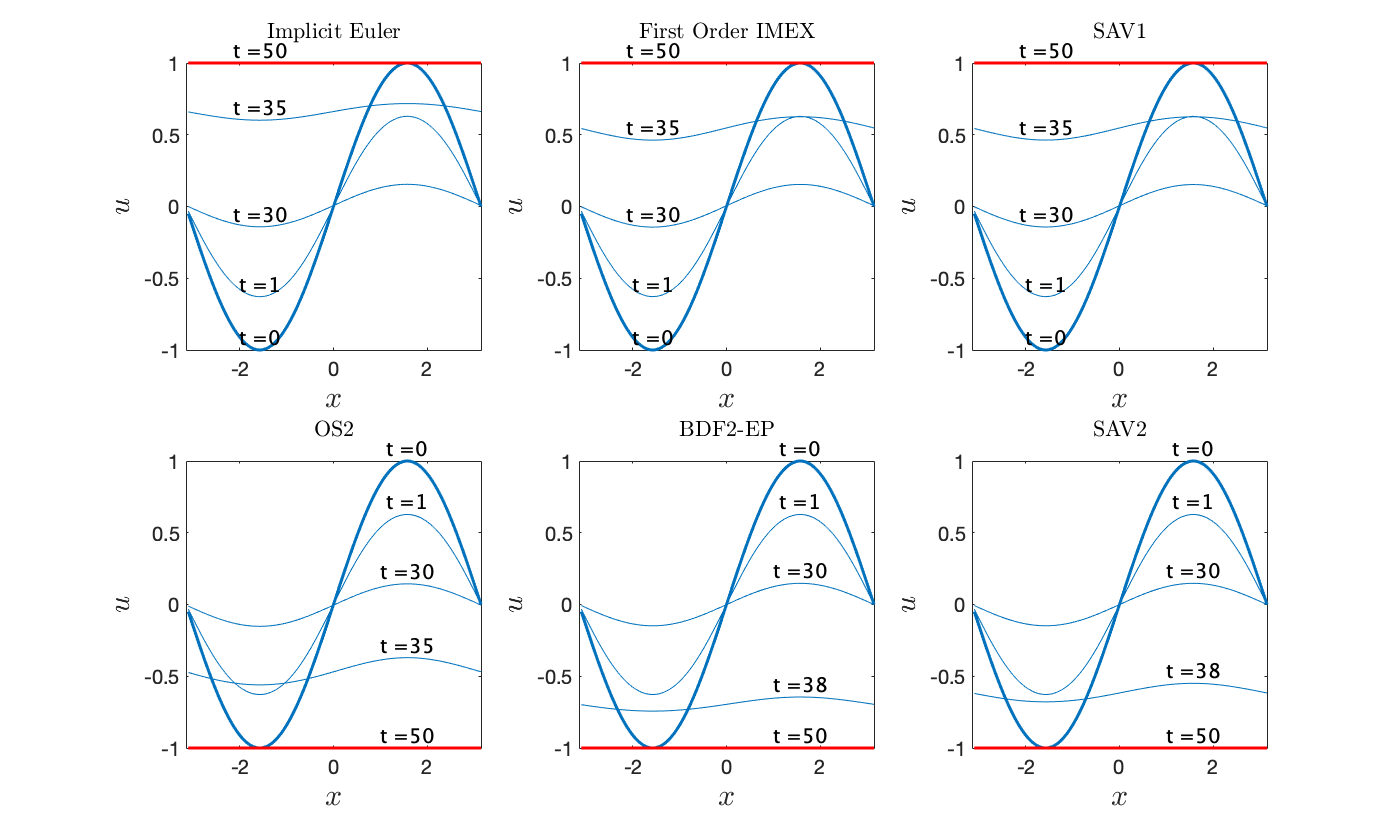}
\caption{\small Dynamics of the Allen-Cahn equation using different numerical schemes ($\ka=1$, $N_p = 128$, $\tau= 0.01$): implicit Euler method \cite{xuj2019}, the first-order IMEX method, the second-order operator splitting methods \cite{cheng2015}, BDF2 extrapolation method \cite{xu2006}, the first- and second-order scalar auxiliary variable methods \cite{shen2018}. }\label{fig:intro2}
\end{figure}

{\bf Outline of the paper.}
The purpose of this work is to rectify this issue for some special initial conditions by introducing a parity-preserving filter.
The importance of this filter is that it eliminates the aforementioned
unwanted projections into the unstable directions at each iteration.
In the simplest situation of odd initial data such as $\sin x$, we dynamically force all the  Fourier coefficient corresponding to cosine Fourier modes of the numerical solution to remain strictly zero after each iteration.
For initial data such as $\sin(Lx)$ with $L\ge 2$ an integer, one can
check that the spectral gap $L$ is preserved in time, and our proposed filter forces the Fourier
coefficients to retain the spectral gap as well as the parity conditions.
Furthermore, we rigorously prove that the filtered solution will converge to the true steady state.

The rest of this paper is organized as follows.
In Section 2 we classify the steady states and analyze their profiles.
In Sections 3 an 4 we analyze the symmetry breaking phenomenon and show how the filtering approach can resolve this issue.
In Section 5, by providing more numerical examples we discuss in more
details about various scenarios where the parity can be preserved or lost when the filter is in effect or turned off.
It will demonstrated that although the algorithm and analysis are provided for 1D Allen-Cahn the effectiveness of the filtering technique is observed for the 2D Allen-Cahn equation and 2D Cahn-Hilliard equation.
In the last section we give some concluding remarks.

\vspace{0.5cm}
\section{Properties of steady states}\label{sect2}
We consider the steady state solution of \eqref{1.ac} which satisfies
\begin{equation}
\label{2.ac}	
\kappa^2u''+u-u^3=0, \quad x\in\mathbb{T}.
\end{equation}
%
%
We first discuss a special class of steady states, called {\bf odd zero-up ground states}.

\begin{defn}[Odd zero-up ground state]
\label{def3.1}
For any $0<\ka<1$, $U_\ka$ is an odd zero-up ground state of \eqref{2.ac} if $U_\ka$ is odd, $U'_\ka(x) >0$ on $\left[0,\frac \pi 2\right)$, and $U'_\ka(x) <0$ on $\left(\frac \pi 2,\pi\right]$.
\end{defn}

To fix the notation, let us recall the  energy functional associated with \eqref{1.ac}:
\begin{align} \label{E_ground01}
E(u) = \int_{\mathbb T} \left(
\frac {\ka^2} 2 (\partial_x u )^2 + \frac 14 (1-u^2)^2 \right)dx.
\end{align}

{
\begin{rem}
By a result from \cite{LQTW_2}, for each $0<\ka<1$ and excluding
the trivial states $u\equiv\pm 1$, $U_{\ka}$ has the lowest energy among
the steady state solutions, namely,
\begin{align}
  E_{\ka}^{(0)}&=\inf_{u \in \mathcal F} E(u)  =E(U_{\ka}),
\end{align}
where the class $\mathcal F$ of admissible functions is given by
\begin{align} \label{eScons}
\mathcal F=\{u~|
~u\in H^1(\mathbb T)~\mathrm{solves}~\eqref{2.ac}~\mathrm{and}~|u|<1~\mathrm{for}~x\in\mathbb T\}.
\end{align}
Furthermore, if $v\in \mathcal F$ attains $E_{\kappa}$, then for some $x_0 \in
\mathbb T$, we have $v(x)= \pm U_{\ka}(x-x_0)$.
Note that the constraint $|u|<1$ in
\eqref{eScons} is to exclude the trivial minimizers $u=\pm 1$. Another way is to impose
symmetry. For example $U_{\ka}$ or $-U_{\ka}$ are the only minimizers amongst all odd $2\pi$-periodic
functions.

\end{rem}
}

It is possible to classify all steady states to \eqref{2.ac} by using $U_\ka$. 
For any $0<\ka<1$, define $m_{\ka}\ge 1$ as the unique integer such that
\begin{align}\label{eq:2.4}
	\frac 1 {m_{\ka}+1} \le \ka  <\frac 1 {m_{\ka}}.
\end{align}

\begin{thm}[\cite{LQTW_2}]
\label{2.mainthm}
Let $0<\ka<1$. If $u$ is a bounded solution to \eqref{2.ac}, then only one of the following holds.
	\begin{enumerate}
		\item [(1)] $u\equiv \pm 1$ or $u\equiv 0$.
		\item [(2)] $u = \pm U_{j\ka} (jx +c)$, for some integer $j\in \{1,2,\ldots,m_\ka\}$ and some constant $c$. Here note that $0<j\ka<1$ by \eqref{eq:2.4} and $U_{j\ka}(j\cdot)$ can be viewed as a rescaled function of $U_{j\ka}(\cdot)$, where $u(\cdot) = U_{j\ka}(\cdot)$ corresponds to the odd zero-up ground state of the equation \begin{equation}
	j^2\kappa^2u''+u-u^3=0, \quad x\in\mathbb{T}.
\end{equation}
	\end{enumerate}
\end{thm}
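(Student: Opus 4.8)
The plan is to reduce the classification to a phase‑plane analysis of the autonomous ODE \eqref{2.ac}, supplemented by an explicit representation of the nonconstant bounded solutions via Jacobi elliptic functions. First I would multiply \eqref{2.ac} by $u'$ and integrate to obtain the first integral $\frac{\ka^2}{2}(u')^2 + V(u) = E$ with $V(u) = \frac{u^2}{2} - \frac{u^4}{4}$. The potential $V$ has a nondegenerate minimum at $u=0$ and nondegenerate maxima at $u = \pm 1$ with $V(\pm 1) = \frac14$, so in the phase plane $(u,u')$ the origin is a center, $(\pm 1, 0)$ are saddles, and the bounded orbits are exactly: the three equilibria $u \equiv 0, \pm 1$; the periodic orbits encircling the center, which fill the energy band $E \in (0, \frac14)$; and the two heteroclinic connections on the separatrix $E = \frac14$. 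Since a solution on $\mathbb T$ must be $2\pi$‑periodic, the heteroclinic (kink) orbits and the unbounded orbits with $E > \frac14$ cannot occur, leaving only the constants (case (1)) and the periodic orbits in the well.

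Next I would extract the symmetry and monotonicity structure of such a periodic orbit. Because \eqref{2.ac} is invariant under $u \mapsto -u$, under $x \mapsto -x$, and under translation, every well orbit crosses $u=0$; translating a zero‑crossing with $u'>0$ to the origin and comparing $u(x)$ with $-u(-x)$, which share the same data at the origin, forces $u(x) = -u(-x)$ by uniqueness for the initial value problem, so $u$ is odd. The orbit being a simple closed curve yields exactly one maximum and one minimum per spatial period, which is precisely the zero‑up pattern of Definition~\ref{def3.1} on each period. Thus every nonconstant bounded solution is, up to sign and phase, a translate of an odd zero‑up profile, and the classification reduces to determining the admissible spatial periods.

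To pin down these periods I would use the explicit solution $u(x) = A\,\mathrm{sn}(\omega x, k)$ with modulus $k \in (0,1)$, which realizes every well orbit upon inverting the (elliptic) first integral. Substituting $\mathrm{sn}'' = -(1+k^2)\mathrm{sn} + 2k^2\mathrm{sn}^3$ into \eqref{2.ac} and matching the coefficients of $u$ and $u^3$ gives $\omega^2 = \frac{1}{\ka^2(1+k^2)}$ and $A^2 = \frac{2k^2}{1+k^2}\in(0,1)$, so the amplitude stays below $1$ in agreement with $|u|<1$. A solution completing $j$ periods on $[-\pi,\pi]$ has spatial period $2\pi/j$ with first maximum at $x=\pi/(2j)$, forcing $\omega = 2jK(k)/\pi$ with $K$ the complete elliptic integral of the first kind, and hence
\begin{align}
(j\ka)^2 = \frac{\pi^2}{4K(k)^2(1+k^2)}.
\end{align}
This exhibits the $j$‑fold solution as $\pm U_{j\ka}(jx + c)$, the rescaled odd zero‑up ground state of $j^2\ka^2 u'' + u - u^3 = 0$.

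It remains to count the admissible $j$ and to guarantee uniqueness, and this is where the main work lies. I would study $g(k) := \frac{\pi^2}{4K(k)^2(1+k^2)}$: since $K$ is strictly increasing with $K(0^+)=\pi/2$ and $K(1^-)=+\infty$, and $1+k^2$ is strictly increasing, $g$ is a strictly decreasing bijection of $(0,1)$ onto $(0,1)$. Hence $g(k)=(j\ka)^2$ has a unique root $k=k_j\in(0,1)$ exactly when $0<j\ka<1$, i.e. when $1\le j\le m_\ka$ by \eqref{eq:2.4}, and no root otherwise; uniqueness of the root gives uniqueness of $U_{j\ka}$ up to sign and phase. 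Collecting the constants together with these $j$‑fold profiles produces precisely the dichotomy (1)–(2). The technical heart is the monotonicity of $g$, equivalently the strict monotonicity of the period map of \eqref{2.ac} that is classically the hardest step; the Jacobi elliptic representation is what reduces it to the elementary observation that both $K(k)$ and $1+k^2$ increase in $k$.
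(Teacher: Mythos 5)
Your proposal is correct in substance. Note first that the paper does not prove Theorem \ref{2.mainthm} at all --- it is imported from \cite{LQTW_2} --- so the only basis for comparison is the machinery the paper sets up around it in Section \ref{sect2}: the first integral \eqref{2e2} and the quadrature relation \eqref{eq:N_ka}, where the period map is expressed as the monotone function $g(N_\ka)$ of the amplitude $N_\ka$. Your argument follows the same skeleton (phase--plane classification of bounded orbits, oddness via the reflection $u(x)\mapsto -u(-x)$ and uniqueness for the IVP, then counting admissible spatial periods $2\pi/j$ through monotonicity of the period map), but you package the hard step differently: instead of the amplitude integral $g(N_\ka)=\int_0^{N_\ka}\bigl((u^2-1)^2-(1-N_\ka^2)^2\bigr)^{-1/2}du$ you use the Jacobi representation $u=A\,\mathrm{sn}(\omega x,k)$, which converts period-map monotonicity into the elementary fact that $K(k)^2(1+k^2)$ is strictly increasing. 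The two are equivalent (the substitution $u=N_\ka\sin\theta$ in \eqref{eq:N_ka} produces exactly a complete elliptic integral), so neither buys more generality here; the quadrature form generalizes more readily to non-polynomial potentials, while your elliptic form gives the amplitude and frequency in closed form. Your coefficient matching ($\omega^2=1/(\ka^2(1+k^2))$, $A^2=2k^2/(1+k^2)$) and the endpoint values $g(0^+)=1$, $g(1^-)=0$ check out, and the resulting count $1\le j\le m_\ka$ agrees with \eqref{eq:2.4}. The only points worth making explicit in a full write-up are (i) that every periodic orbit in the well is actually realized by the $\mathrm{sn}$ family (via uniqueness of the IVP at the zero crossing, parametrizing orbits by amplitude), and (ii) that a nonconstant $2\pi$-periodic solution has minimal period $2\pi/j$ for an integer $j$; both are routine.
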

\begin{rem}
For $\ka\geq 1$, it can be shown (see \cite{LQTW_2}) that
if $u$ is a bounded steady state solution to \eqref{2.ac}, then $u\equiv 0$ or $\pm 1$.
\end{rem}

Roughly speaking, the significance of the odd zero-up ground state $U_{\ka}$ introduced in
Definition \ref{def3.1} is that, the set $\{U_{\ka} \}_{0<\ka<1} $ gives a clean description 
of all possible bounded steady states of \eqref{2.ac}. Indeed by Theorem \ref{2.mainthm},
any other bounded steady state must either be $0$, $\pm 1$ or a rescaled, translated copy
from the catalogue $\{U_{\ka} \}_{0<\ka<1}$.  Therefore it is of fundamental importance to
study the odd zero-up ground states $\{U_{\ka} \}_{0<\ka<1}$.

We now characterize the detailed profile of the odd zero-up ground state of $\eqref{2.ac}$. 
Multiplying $\eqref{2.ac}$ (now $u$ is replaced by $U_\ka$) by $U'_\ka$, integrating the resulting equation from $\frac \pi 2$ to $x$, and using $U_\ka^{\prime}(\frac {\pi} 2) =0$ (a fact from the definition \ref{def3.1}), we have
\begin{equation}\label{2e2}
(U_\ka^{\prime})^2 = \frac 1 { 2 {\ka^2} }
\Big( (U_\ka^2-1)^2 - (N_\ka^2-1)^2 \Big),
\end{equation}
where $N_{\ka} \coloneqq U_\ka(\frac \pi 2)$ is the maximum of $U_\ka$.

By \eqref{E_ground01} and \eqref{2e2},  the ground state energy $E_{\ka}^{(0)}
=E(U_{\ka})$ can be rewritten as
\begin{equation}
\label{E_grd001}
E_{\ka}^{(0)}  = \int_{\mathbb T} \Bigl( \frac 12 (U_{\ka}^2(x)-1)^2 -\frac 14
(N_{\ka}^2-1)^2 \Bigr) dx.
\end{equation}
Since $U_\ka$ is monotonically increasing on $(0,\frac\pi 2)$, we have
\begin{align}\label{eq:ode_Uka}
U_\ka^{\prime}(x) = \frac 1 {\sqrt 2 {\ka} }
\sqrt{ (U_\ka^2-1)^2 - (N_\ka^2-1)^2}, \quad x\in (0,\pi/ 2),
\end{align}
with $U_\ka(0)=0$.
This yields
\begin{align} \label{eq:N_ka}
g(N_\ka) \coloneqq\int_0^{N_\ka} \frac 1 { \sqrt{ (u^2-1)^2 - (1- N_\ka^2)^2} } du = \int_0^{\frac\pi2}\frac{1}{\sqrt{2-N_{\ka}^2(1+\sin^2\theta)}}d\theta=\frac{\pi}{2\sqrt{2}\ka}.
\end{align}
Note that $g(N_k)$ is monotonically increasing on $[0,1)$,
\[
g (0) = \frac {\pi} {2\sqrt 2}, \quad g(N_\ka) \to\infty\;\; \mbox{as} \;\; N_\ka\to 1.
\]
From \eqref{eq:N_ka}, we  derive the constraint
\begin{equation}
0<\ka<1.
\end{equation}

We  have the following monotonicity properties of  $N_\ka$ and $U_\ka$ w.r.t. $\ka$.


\begin{thm}[Monotonicity of $U_\ka$ and $E_\ka^{(0)}$]
If $0<\ka_1<\ka_2< 1$, then
\begin{itemize}
\item [(1)] the monotonicity of $U_\ka$ w.r.t. $\ka$ holds point-wisely
\begin{equation}
U_{\kappa_1}(x)>U_{\kappa_2}(x),\quad x\in\left(0,\pi/2\right];
\end{equation}
\item [(2)] 	the monotonicity of $E_\ka^{(0)}$ w.r.t. $\ka$ also holds
\begin{equation}
E_{\ka_1}^{(0)} < E_{\ka_2}^{(0)}.
\end{equation}
\end{itemize}
Moreover,
\begin{align} \label{E_eplimit00}
\lim_{\ka \to 0} \frac {E_{\ka}^{(0)} } { {\ka} } = \frac 4 3 \sqrt 2.
\end{align}
\end{thm}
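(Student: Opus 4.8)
The plan is to read off everything from the amplitude equation \eqref{eq:N_ka} and the first integral \eqref{2e2}. The preliminary step I would record is that $\ka\mapsto N_\ka$ is strictly decreasing: $g$ is strictly increasing on $[0,1)$ while $g(N_\ka)=\frac{\pi}{2\sqrt2\ka}$ is strictly decreasing in $\ka$, so $N_\ka=g^{-1}\!\big(\frac{\pi}{2\sqrt2\ka}\big)$ is strictly decreasing and $\ka_1<\ka_2$ forces $N_{\ka_1}>N_{\ka_2}$. This fact drives both (1) and the limit.

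For (1) I would pass to the inverse of $U_\ka$, which is strictly increasing on $[0,\pi/2]$. Separating variables in \eqref{eq:ode_Uka}, the value $y\in[0,N_\ka]$ is attained at
\[
x_\ka(y)=\sqrt2\,\ka\int_0^y\frac{du}{\sqrt{(u^2-1)^2-(N_\ka^2-1)^2}}.
\]
Fixing $y\in(0,N_{\ka_2}]$ and comparing, the prefactor is smaller for $\ka_1$, and since $N_{\ka_1}>N_{\ka_2}$ lie in $(0,1)$ we have $(N_{\ka_1}^2-1)^2<(N_{\ka_2}^2-1)^2$, so the integrand is also pointwise smaller for $\ka_1$; hence $x_{\ka_1}(y)<x_{\ka_2}(y)$. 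Reading this backwards, for $x\in(0,\pi/2)$ set $y=U_{\ka_2}(x)$, so that $x_{\ka_1}(y)<x_{\ka_2}(y)=x$ and therefore $U_{\ka_1}(x)>U_{\ka_1}(x_{\ka_1}(y))=y=U_{\ka_2}(x)$ by monotonicity of $U_{\ka_1}$; at $x=\pi/2$ the inequality is exactly $N_{\ka_1}>N_{\ka_2}$.

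For (2) I would sidestep differentiation in $\ka$ and use the variational characterization in the Remark: for each $\ka$, $U_\ka$ minimizes $E_\ka(u)\coloneqq\int_{\mathbb T}\big(\frac{\ka^2}2(\partial_x u)^2+\frac14(1-u^2)^2\big)\,dx$ among odd $2\pi$-periodic $H^1$ functions. Since $U_{\ka_2}$ is an admissible odd competitor for the $\ka_1$-energy,
\[
E_{\ka_1}^{(0)}=E_{\ka_1}(U_{\ka_1})\le E_{\ka_1}(U_{\ka_2})=E_{\ka_2}(U_{\ka_2})-\frac{\ka_2^2-\ka_1^2}{2}\int_{\mathbb T}(\partial_x U_{\ka_2})^2\,dx<E_{\ka_2}^{(0)},
\]
the last inequality using $\ka_1<\ka_2$ and $\int_{\mathbb T}(\partial_x U_{\ka_2})^2\,dx>0$.

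For the limit I would convert $E_\ka^{(0)}$ into an explicit $U$-integral. By the reflection symmetry $U_\ka(\pi-x)=U_\ka(x)$ and oddness, $E_\ka^{(0)}$ is four times the contribution of $[0,\pi/2]$; changing variables $x\mapsto U$ via \eqref{eq:ode_Uka} in \eqref{E_grd001} and splitting the numerator $\frac12(U^2-1)^2-\frac14(N_\ka^2-1)^2$ into a regular part and a singular part (the latter contributing $\frac14(N_\ka^2-1)^2\,g(N_\ka)$) I expect
\[
E_\ka^{(0)}=2\sqrt2\,\ka\int_0^{N_\ka}\!\!\sqrt{(U^2-1)^2-(N_\ka^2-1)^2}\,dU+\frac{\pi}{2}(1-N_\ka^2)^2.
\]
As $\ka\to0$ we have $N_\ka\to1$, so dominated convergence with majorant $1-U^2$ gives the integral $\to\int_0^1(1-U^2)\,dU=\frac23$, while the elementary bound $g(N)\le\frac{\pi}{2\sqrt2}(1-N^2)^{-1/2}$ (from $2-N^2(1+\sin^2\theta)\ge2(1-N^2)$ in the $\theta$-form of \eqref{eq:N_ka}) yields $\frac{(1-N_\ka^2)^2}{\ka}=\frac{2\sqrt2}{\pi}\,g(N_\ka)(1-N_\ka^2)^2\le(1-N_\ka^2)^{3/2}\to0$. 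Dividing by $\ka$ then gives $E_\ka^{(0)}/\ka\to2\sqrt2\cdot\frac23=\frac43\sqrt2$. I expect this last step to be the main obstacle: one has to peel off the genuinely singular behavior near $U=N_\ka$ (which the exact value of $g(N_\ka)$ eliminates) and then control the rate $N_\ka\to1$, where the crude square-root bound on $g$ conveniently replaces the sharper logarithmic asymptotics $g(N)\sim\frac12\log\frac1{1-N^2}$.
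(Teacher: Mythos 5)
Your argument is correct, but it follows a genuinely different route from the paper's. The paper disposes of the two monotonicity statements by citing \cite{LQTW_2}, and proves the limit \eqref{E_eplimit00} by invoking the exponentially sharp $\tanh$-profile estimate \eqref{sdy1.2} (also imported from \cite{LQTW_2}) and computing $\sqrt2\int_{\mathbb R}(1-\tanh^2 y)^2\,dy=\tfrac43\sqrt2$. You instead work entirely from the first integral \eqref{2e2} and the amplitude equation \eqref{eq:N_ka}: the monotone decrease of $N_\ka$ plus the comparison of the inverse functions $x_\ka(y)$ gives (1); using $U_{\ka_2}$ as an odd competitor in the $\ka_1$-minimization (the variational characterization stated in the Remark) gives (2) with strictness from $\int(\partial_xU_{\ka_2})^2>0$; and the exact identity $E_\ka^{(0)}=2\sqrt2\,\ka\int_0^{N_\ka}\sqrt{(U^2-1)^2-(N_\ka^2-1)^2}\,dU+\frac\pi2(1-N_\ka^2)^2$ (which I checked follows from \eqref{E_grd001} after the change of variables and $g(N_\ka)=\frac{\pi}{2\sqrt2\ka}$) reduces the limit to dominated convergence plus the crude bound $g(N)\le\frac{\pi}{2\sqrt2}(1-N^2)^{-1/2}$, with no need for the $\tanh$ asymptotics at all. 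The trade-off: your proof is essentially self-contained (the only external input is the statement that $\pm U_\ka$ minimize $E$ among odd periodic functions, which the paper records anyway), gives exact formulas, and makes the mechanism behind each monotonicity visible; the paper's proof is shorter on the page but leans on the companion reference for everything except the final $\tanh$ integral. All the individual steps you propose check out, including the sign bookkeeping $(N_{\ka_1}^2-1)^2<(N_{\ka_2}^2-1)^2$ and the rate $(1-N_\ka^2)^2/\ka\le(1-N_\ka^2)^{3/2}\to0$.
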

\begin{proof}
The proof of  monotonicity can be found in \cite{LQTW_2}.
Here we only show the proof of \eqref{E_eplimit00}.
For $0<\ka\ll 1$, it can be proved (see \cite{LQTW_2}) that $U_\ka$ satisfies
\begin{align} \label{sdy1.2}
0\le  \tanh\left( \frac x {\sqrt 2 \ka } \right)- U_{\ka}(x)\le  \exp\left(-\frac {c} {\ka}\right),
\qquad\forall\, 0\le x\le \frac {\pi}2,
\end{align}
where $c>0$ is an absolute constant.
Then \eqref{E_eplimit00} follows from
a simple computation
\begin{align}
\lim_{\ka \to 0} \frac {E_{\ka}^{(0)} } { {\ka} }  ={\sqrt 2}  \int_{\mathbb R} (\tanh^2 y -1)^2 dy
 = {\sqrt 2} \int_{\mathbb R} (1-\tanh^2 y) \, d (\tanh y)
=\frac 43 {\sqrt 2}.
\end{align}
This completes the proof of \eqref{E_eplimit00}.
\end{proof}
The monotonicity of ground state energy w.r.t. $\ka$, the limit property \eqref{E_eplimit00}, and the monotonicity of $U_\ka$ w.r.t. $\ka$, are verified numerically in Figure \ref{fig:Eeps}.

\begin{figure}[!h]
\centering
\includegraphics[width=0.48\textwidth]{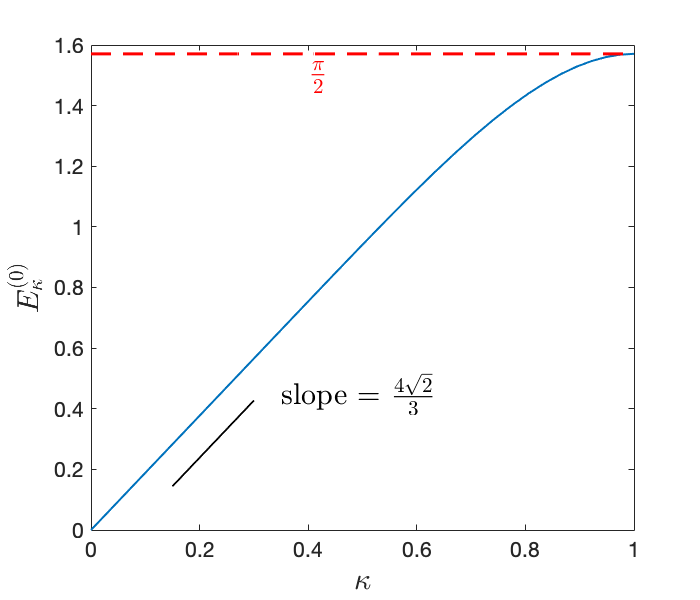}
\includegraphics[width=0.48\textwidth]{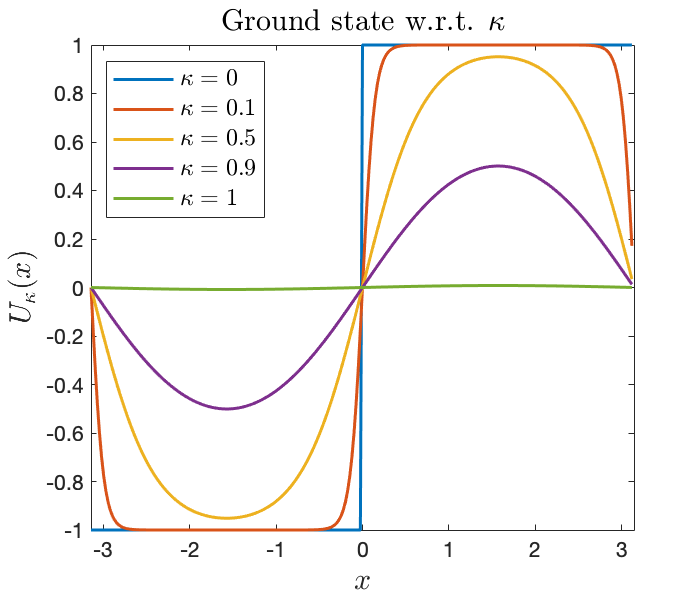}
\caption{$E^{(0)}_\ka$ w.r.t. $\ka\in[0,1]$ and the ground state $U_\ka$ for different $\ka$. }\label{fig:Eeps}
\end{figure}

Note that instead of solving the Allen-Cahn dynamics it is possible to compute the profile of the aforementioned ground state $U_\ka$ in two straightforward ways, namely, determine $N_\ka$ via \eqref{eq:N_ka} by using Newton's iteration method, or solve the ODE \eqref{eq:ode_Uka} defined on $[0,\frac \pi 2]$ by using some standard ODE solvers such as the second-order predictor-corrector method.

\section{Symmetry breaking}
The following sharp convergence result is proved in \cite{LQTW_2}.
\begin{thm}[Convergence for single bump initial data]\label{3.1.thm}
Let $0<\ka<1$.
Assume the initial data $u_0: \, [-\pi, \pi] \to \mathbb R$ is odd, $2\pi$-periodic, and $E(u_0)\le\frac \pi 2$.
Suppose $u_0$ is monotonically increasing on $[0, \frac {\pi}2]$ and $u_0(\pi-x)=u_0(x)$ for
all $\frac {\pi}2 \le x \le \pi$.  Then $u(x,t) \to U_{\ka}$ as $t \to \infty$.
\end{thm}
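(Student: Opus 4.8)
The plan is to exploit the gradient-flow structure of \eqref{1.ac} together with the two discrete symmetries forced by the hypotheses, and then to analyze the $\omega$-limit set of the trajectory. Recall that along the flow the energy \eqref{E_ground01} is a strict Lyapunov functional, $\frac{d}{dt}E(u(\cdot,t))=-\int_{\mathbb T}|\partial_t u|^2\,dx\le 0$, with equality only at steady states. First I would record the standard a priori bounds: the region $\{|u|\le\max(1,\|u_0\|_\infty)\}$ is invariant (at $u=M\ge 1$ one has $f(M)=M^3-M\ge 0$), so by the parabolic maximum principle the solution is global and uniformly bounded in $L^\infty$. Parabolic smoothing then makes the orbit $\{u(\cdot,t)\}_{t\ge 1}$ precompact in $C^1(\mathbb T)$, so $\omega(u_0)$ is nonempty, compact, connected and flow-invariant.

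Next I would show that the single-bump profile is preserved for all time. Since $f(u)=u^3-u$ is odd and the reflection $x\mapsto\pi-x$ leaves $\partial_{xx}$ invariant, both $-u(-x,t)$ and $u(\pi-x,t)$ solve \eqref{1.ac} with the same initial data; by uniqueness, $u(\cdot,t)$ stays odd and satisfies $u(\pi-x,t)=u(x,t)$ for all $t$. These symmetries reduce everything to $[0,\pi/2]$, on which $v:=\partial_x u$ solves the linear parabolic equation $\partial_t v=\ka^2\partial_{xx}v-f'(u)\,v$ with a homogeneous Neumann condition at $x=0$ (as $v$ is even) and a homogeneous Dirichlet condition at $x=\pi/2$ (as $u$ is symmetric about $\pi/2$). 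Because $v(\cdot,0)=u_0'\ge 0$ on $[0,\pi/2]$, the maximum principle — applied to $e^{\lambda t}v$ to absorb the sign of the zeroth-order coefficient $-f'(u)=1-3u^2$ — yields $v(\cdot,t)\ge 0$ for all $t$. Hence $u(\cdot,t)$ remains monotonically nondecreasing on $[0,\pi/2]$, and every limit profile inherits oddness, reflection symmetry about $\pi/2$, and monotonicity on $[0,\pi/2]$.

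Then I would identify the limit. By LaSalle's invariance principle each $\phi\in\omega(u_0)$ is a bounded steady state of \eqref{2.ac}, so Theorem \ref{2.mainthm} applies: $\phi$ is $0$, $\pm 1$, or a rescaled and translated copy $\pm U_{j\ka}(j\cdot+c)$. Oddness excludes $\pm 1$ and forces the translation and the sign in the $j=1$ case so that only $U_{\ka}$ survives there; monotonicity on $[0,\pi/2]$ excludes every multi-bump state $j\ge 2$, since $U_{j\ka}(jx)$ already oscillates on $[0,\pi/2]$. To rule out $\phi\equiv 0$ I would use the energy threshold: a direct computation gives $E(0)=\frac{\pi}{2}$, exactly the hypothesized bound. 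As $E$ strictly decreases off equilibria and $u_0$ is a genuine bump ($u_0\not\equiv 0$), we get $E(u(\cdot,t))<\frac{\pi}{2}$ for $t>0$, so the common energy value on $\omega(u_0)$ lies below $E(0)$ and $0\notin\omega(u_0)$. Therefore $\omega(u_0)=\{U_{\ka}\}$, a single point, and precompactness of the orbit upgrades this to full-trajectory convergence $u(\cdot,t)\to U_{\ka}$.

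I expect the main obstacle to be the genuine propagation of monotonicity in the second step, and its interaction with the exclusion bookkeeping in the third. The delicate point is that the energy bound \emph{alone} cannot eliminate the multi-bump steady states: for small $\ka$ the states $\pm U_{j\ka}(j\cdot)$ can have energy well below $\frac{\pi}{2}$, so the argument truly relies on monotonicity being carried by the flow and on the classification of Theorem \ref{2.mainthm} restricted to odd, $\frac{\pi}{2}$-symmetric profiles. A secondary technical point is the passage from ``$\omega(u_0)$ is a single steady state'' to convergence of the whole trajectory; here it is immediate because the admissible limit is unique, but in a more general setting one would invoke a \L ojasiewicz--Simon inequality or the one-dimensional convergence theory of Zel'enyak/Matano.
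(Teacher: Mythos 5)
The paper does not actually prove Theorem \ref{3.1.thm}; it is quoted from the companion work \cite{LQTW_2}, so there is no in-paper argument to compare against. Your proposal is, on its own terms, a correct and essentially complete proof along the standard gradient-flow lines: the Lyapunov structure and parabolic smoothing give a precompact orbit whose $\omega$-limit set consists of equilibria; uniqueness propagates oddness and the reflection symmetry about $\pi/2$; the maximum principle applied to $v=\partial_x u$ on $[0,\pi/2]$ (Neumann at $0$, Dirichlet at $\pi/2$) propagates monotonicity; and the classification of Theorem \ref{2.mainthm} plus the three inherited properties leave only $0$, $-U_\ka$ and $U_\ka$ as candidates, of which $-U_\ka$ is killed by $\phi\ge 0$ on $[0,\pi/2]$ and $0$ is killed by the strict energy decay below $E(0)=\pi/2$. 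You also correctly identify the crux: the energy bound alone cannot exclude the multi-bump states $\pm U_{j\ka}(j\cdot)$, whose energies equal $E_{j\ka}^{(0)}<\pi/2$, so the propagation of monotonicity is genuinely load-bearing.

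Two small caveats. First, your exclusion of $\phi\equiv 0$ needs $u_0\not\equiv 0$ (equivalently, that $u_0$ is not already an equilibrium of energy exactly $\pi/2$); as literally stated the theorem admits $u_0\equiv 0$ if ``monotonically increasing'' is read non-strictly, in which case the conclusion fails, so you are implicitly (and reasonably) reading the hypothesis as strict or as $u_0\not\equiv 0$. Second, the hypothesis $E(u_0)\le\pi/2$ only gives $u_0\in H^1$, so the maximum-principle step for $v=\partial_x u$ at $t=0^+$ should be run either by approximating $u_0$ by smooth monotone data or by starting the argument from an arbitrarily small positive time after smoothing; this is routine but worth a sentence. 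Neither point affects the soundness of the overall strategy.
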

In particular it follows from Theorem \ref{3.1.thm} that the solution corresponding to $u_0(x) = \sin(x)$ should converge to a nontrivial odd steady state. However, quite surprisingly, the following numerical computation shows that this is not always the case.

\subsection{A prototypical example on loss of parity}
We solve the following Allen-Cahn dynamics
\begin{align}\label{eq:example}
\begin{cases}
\partial_t u = \ka^2 \partial_{xx} u  - f(u),  \\
u_0 = \sin(x),
\end{cases}
\end{align}
using the first-order implicit-explicit scheme
\begin{equation}\label{sch:imex}
\frac{u^{n+1}-u^n}{\tau}  = \ka^2 \partial_{xx} u^{n+1} - f(u^n),\quad n\geq 0,
\end{equation}
with $\ka = 0.9$ and time step $\tau = 0.01$.
More precisely, we use the pseudo-spectral method (see, e.g., \cite{shen2011,trefethen00} with $N=256$ Fourier modes for space discretization.
The computed steady state and its energy evolution is illustrated in Figure \ref{fig0}.
It is observed that the oddness is lost gradually in time and the computed steady state is completely
incorrect.

\begin{figure}[!h]
\centering
\includegraphics[width=0.49\textwidth]{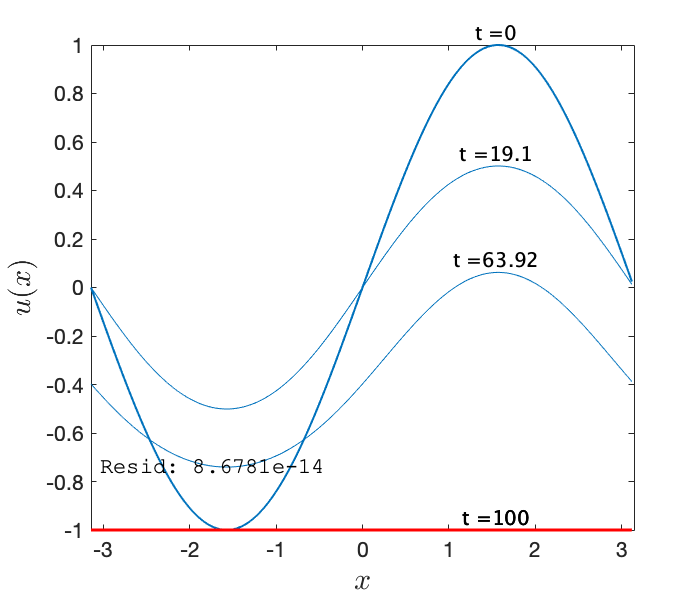}
\caption{\small Wrong steady state (red curves) for \eqref{eq:example}
computed by the first-order IMEX scheme with $\ka = 0.9$, $\tau = 0.01$, and $N = 256$.}\label{fig0}
\end{figure}

\subsection{Amplification of machine error}
We suspect that the above unreasonable symmetry-breaking phenomenon is connected with  the machine truncation error.
To understand this, one can consider a simple testing ODE
\begin{equation}\label{eq:testode}
u'(t) = u(t), \quad u_0 = 10^{-15}\approx 0,
\end{equation}
where $u_0\ll 1$ is regarded as the small machine error.
Clearly, the solution is $u(t) = u_0\exp(t)$ so that $u(35)\approx 1.5860 >1$.
However, if $u_0 = 0$ rigorously, $u(t)\equiv 0$.
This means that the initial machine error is amplified significantly after $t>35$.

A further investigation shows that the data stored by computer can be easily ``polluted'', even at time $t=0$.
For example, if we input $u_0 (x)= \sin(8x)$, we have
 $u_0(\frac{\pi j}{8})=0$ for all integers $j$.
However, standard software such as {\sc MATLAB}  gives  nonzero values due to machine errors.
As a consequence, the oddness is already violated at time $t=0$.
See Figure \ref{fig:intro6} for a graphical illustration.
This suggests that the values of $u$ may be contaminated by machine round-off errors at the very beginning of simulation.
\begin{figure}[!h]
\centering
\includegraphics[width=0.49\textwidth]{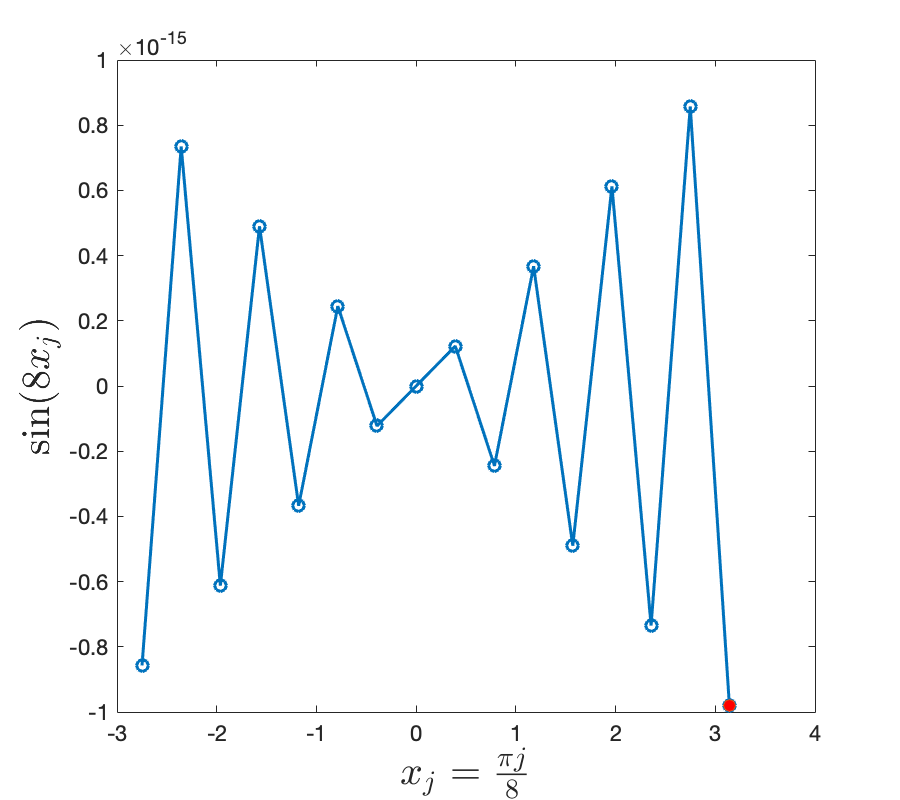}
\includegraphics[width=0.49\textwidth]{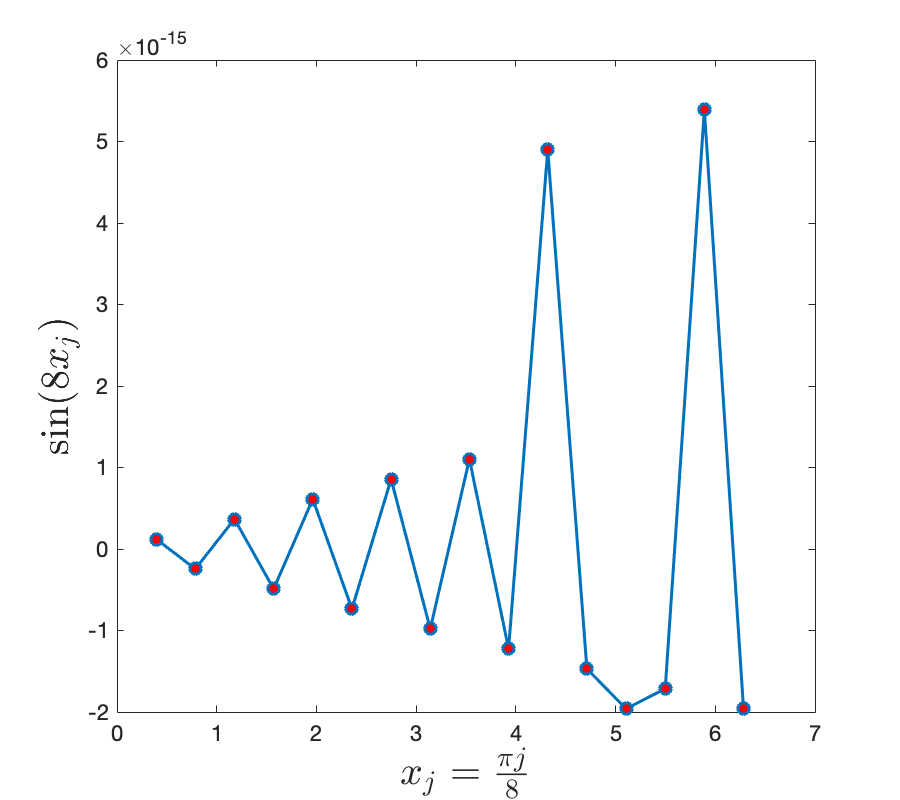}
\caption{\small Initial data of $u_0 = \sin(8x)$ at zero points respectively on $[-\pi,\pi]$ and $[0,2\pi]$, computed by {\sc MATLAB}. Symmetries at $(0,0)$ (left) and $(\pi,0)$ (right) are not preserved due to machine errors.}\label{fig:intro6}
\end{figure}

Even if we rectify the initial data to be odd (by numerically forcing $u_0$ to be
odd), the oddness can not be ensured after a few iterations, due to the gradual accumulation of machine error.
This issue is serious, since it leads to convergence to the incorrect steady state and thereby destroys the fidelity of the numerical simulation in the longer run.
In a deeper way the problem of the spurious growth of $u(0,t)$ (see Figure \ref{fig:intro4})
 is connected with the amplification of  projections into the exponentially-fast directions (such
 as projections into the steady state functions $\pm 1$ for the Allen-Cahn case) which
 can be summarized in the following abstract diagram in Figure \ref{tikz}.

\begin{figure}[!h]
\centering
\includegraphics[width=0.48\textwidth]{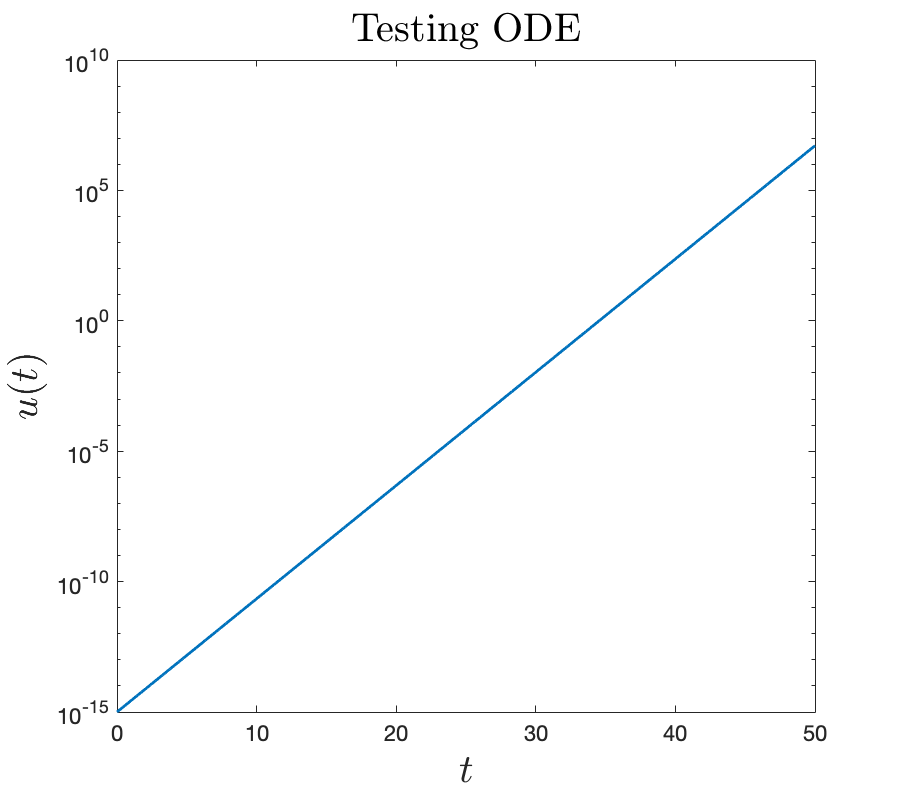}
\includegraphics[width=0.48\textwidth]{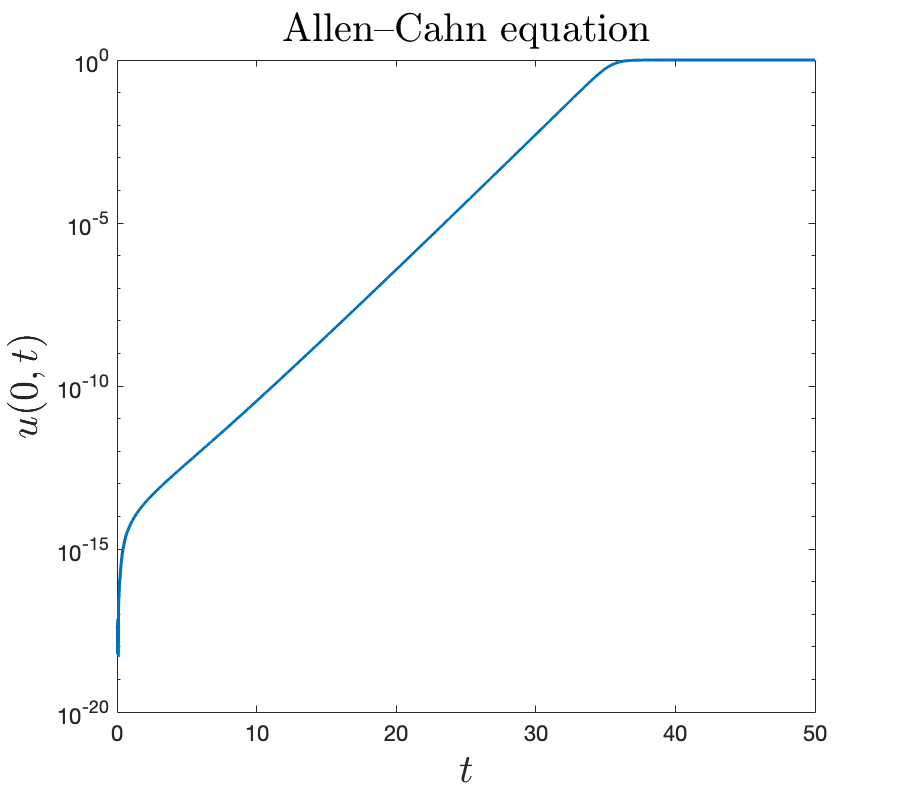}
\caption{\small Amplification of machine error of the testing ODE \eqref{eq:testode} (left); an example of the evolution of $u(0,t)$ for the Allen-Cahn equation \eqref{1.ac} with odd initial condition (right). }\label{fig:intro4}
\end{figure}

\tikzset{
block/.style={
    rectangle,
    draw,
    text width=8em,
    text centered,
    rounded corners
}
}
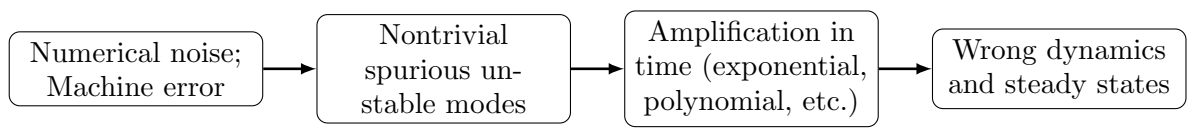
\begin{figure}[H]
{\small
\centering
\begin{tikzpicture}
\matrix (m)[matrix of nodes, column  sep=.7cm,row  sep=8mm, align=center, nodes={rectangle,draw, anchor=center} ]{
    |[block]| {Numerical noise; Machine error}    &
    |[block]| {Nontrivial spurious unstable modes}     &
    |[block]| {Amplification in time (exponential, polynomial, etc.) }  &
    |[block]| {Wrong dynamics and steady states}          \\
};
\path [>=latex,->,line width=1.pt] (m-1-1) edge (m-1-2);
\path [>=latex,->,line width=1.pt] (m-1-2) edge (m-1-3);
\path [>=latex,->,line width=1.pt] (m-1-3) edge (m-1-4);
\end{tikzpicture}
\caption{Schematic diagram of the amplification of machine error.}\label{tikz}
}
\end{figure}

\section{Our filtered approach}

\subsection{A new symmetry-preserving filter}
 It is known that $u(x, t)$ should preserve the oddness for all $t$ if the initial condition $u_0$ is odd.
However, standard numerical methods may converge to the spurious steady states  in long time due to the accumulation of machine error. 
To resolve this issue we introduce a new
symmetry-preserving filter which amounts to forcing the parity of the solution at every
iteration step.
Assume the initial data $u_0$ is odd and  has  $2\pi$ as its minimal period.
To preserve the oddness and the invariance of Fourier sine series, we propose a new Fourier filter imposing the zeroth Fourier coefficient to be zero and the real part of all Fourier coefficients to remain zero, namely, $\forall n\ge 0$,
\begin{subequations}\label{e4.2new}
\begin{align}
\widehat {u^n} (k = 0) = 0; \label{e4.2new1}
\end{align}
\begin{align}
\operatorname{Re} ( \widehat{u^n}(k) ) =0. \label{e4.2new2}
\end{align}
\end{subequations}
One should note that in a typical {\sc MATLAB} implementation, for
\eqref{e4.2new} to hold, we just need to use the command
\begin{equation}
\widehat{u^n}(k) = \operatorname{Im} ( \widehat{u^n}(k) )*1i \label{e4.2new2}
\end{equation}
in each iteration.
Figure \ref{fig0-2} illustrates the steady state computed with this Fourier filter respectively, where $u_0=\sin(x),\ka =0.9, ~T = 100$.
Compared to the result without filter in Figure \ref{fig0}, the proposed filter clearly helps the algorithm to converge to the correct steady state in long time.

Further, we illustrate plots of $\max_x|u_\infty|$ w.r.t. $\ka$ in Figure \ref{fig:umax}, computed respectively by the IMEX Fourier scheme \eqref{sch:imex} without and with filter.
It can be seen that for the classical method without filter, $\max_x|u_\infty| = 1$ when $\ka\gtrsim 0.70$, indicating that the computed steady state is wrong.
On the other hand,
the filtered method  guarantees that the numerical solution converges to the correct steady state.

\begin{figure}[!h]
\centering
\includegraphics[width=0.49\textwidth]{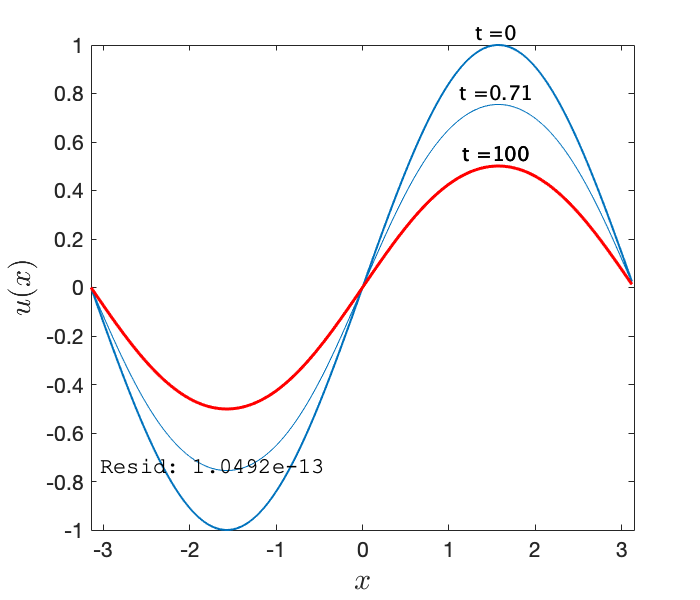}
\caption{\small Correct steady state (red curves) for problem \eqref{eq:example}
computed by the filtered first-order IMEX scheme with $\ka = 0.9$, $\tau = 0.01$, and $N = 256$.}\label{fig0-2}
\end{figure}

\begin{figure}[!h]
\centering
\includegraphics[width=0.49\textwidth]{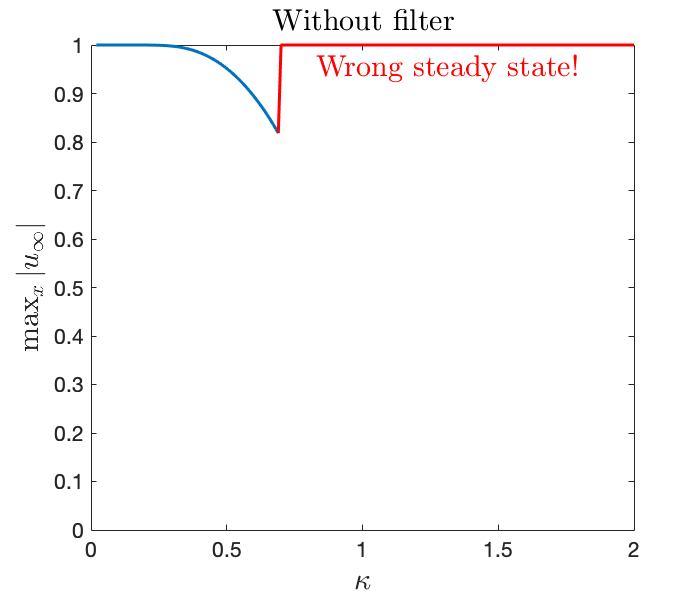}
\includegraphics[width=0.49\textwidth]{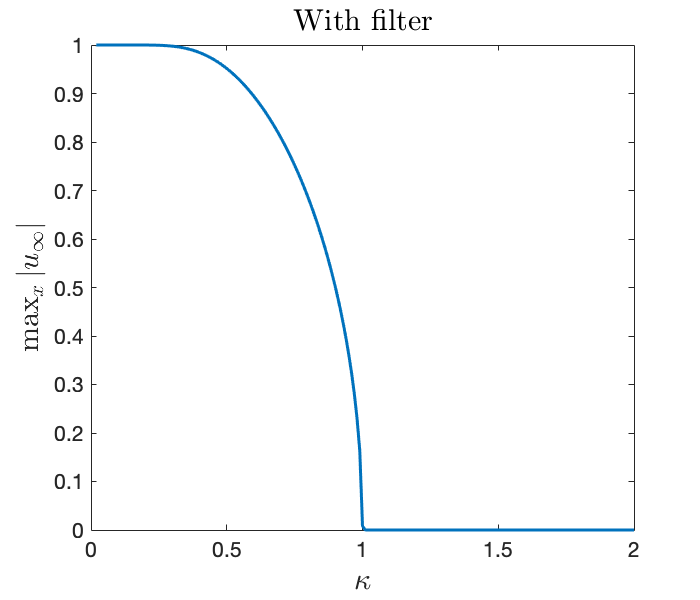}
\caption{\small $\max_{x}|u_\infty|$ w.r.t. $\ka$ computed by the first-order IMEX scheme respectively  without filter (left) and with filter (right).}\label{fig:umax}
\end{figure}

\subsection{Convergence of the filtered approach}
In this part, we give some mathematical analysis to demonstrate that the filtered numerical solution can converge to the desired steady state solution.

To explain the heart of the matter, we shall carry out the analysis for a semi-discrete model with perturbations introduced by machine error.  To simplify the analysis we only consider a restricted
set of parameters. For example, we take the representative case $\ka = 0.9$ which was considered in the earlier numerical experiments (see Figures \ref{fig0} and \ref{fig0-2}) to ensure the uniqueness of the zero-up
ground state and simplify the relevant perturbative analysis.
To minimize technicality we did not work with the optimal assumptions. In forthcoming works
we will develop a comprehensive theoretic framework to address all issues related to full
discretization, truncation error and so on.

To this end, we consider $\ka = 0.9$ and
\begin{equation}
\frac{w-u}{\tau} = \ka^2 \partial_{xx} w - (u^3-u), \quad\mbox{on } [-\pi,\pi],
\end{equation}
where $u,w:\mathbb T\rightarrow \mathbb R$ are both odd and smooth.
Clearly
\begin{equation}
w = (1-\ka^2 \tau\partial_{xx})^{-1}[u-\tau(u^3-u)].
\end{equation}
We denote the map $u\mapsto w$ as
\begin{equation}
w =\mathcal N(u).
\end{equation}
The main model for the filtered solution is given by
\begin{equation}\label{fil1}
\left\{
\begin{aligned}
u^{n+1} & = \mathcal N(v^n),\\
v^{n+1} & = u^{n+1}+\eps^{n+1},\\
v^0 & = u^0 + \eps^0,
\end{aligned}
\right.
\end{equation}
where $\eps^n:\mathbb T\rightarrow \mathbb R$ are given machine error functions.
Our standing assumption is that
\begin{equation}\label{fil2}
\sup_{n\geq 0} \|\eps^n\|_{H^1(\mathbb T)}\leq \eps_* \ll 1,
\end{equation}
where $\eps_*>0$ is a very small constant which corresponds to machine precision.
We shall assume that all $\eps^n$ are odd.
This very assumption corresponds to our filtering technique, i.e., we apply the Fourier filter at each iteration step. One should note that $u^{n+1}$ corresponds to the exact numerical
solution computed flawlessly (i.e. without machine error) using the numerical scheme, and
$v^{n+1}$ corresponds to the actual numerical solution taking into account of the
machine error.

We point out that the $H^1$ assumption \eqref{fil2} is reasonable at least
	for moderately small $N$, where $N$ is the number of Fourier modes in
	the Fourier collocation method.  A more realistic assumption is to use
	only $L^{\infty}$-norm. Here to simplify the relevant analysis we employ
	this slightly stronger assumption.

In Lemma \ref{lemfil3} below, we just consider the case of $\kappa=0.9$, which ensures that there exists
a unique odd zero-up ground state. Moreover, one should note that as $\kappa \to 1$, the energy
of the odd zero-up ground state goes to $\frac \pi 2$ (see Figure \ref{fig:Eeps}) and
the constant $C_1$ in \eqref{C1eee0} has to be taken sufficiently close to zero. In order not
to overburden the reader with these technicalities and to simplify the analysis, we consider the
simplest case of $\kappa=0.9$ to illustrate the main ideas.

\begin{lem}[Almost steady state]\label{lemfil3}
	Fix $\ka = 0.9$ and assume that $U_{\operatorname{steady}}$ is the unique odd zero-up ground state for the equation $\kappa^2u''+u-u^3=0$ on $\mathbb T=[-\pi,\pi]$. Suppose $u:\mathbb T\to\mathbb{R}$ is a smooth odd function
	satisfying
	\begin{align}\kappa^2u''+u-u^3=r(x),\end{align}	
	where the residual error function $r(x)$ satisfies
	\begin{equation}
		\|r\|_2\le r_*\ll1.
	\end{equation}	
	Assume
	\begin{align} \label{C1eee0}
	E(u)=\int_{\mathbb T}\left(\frac12\kappa^2(u')^2+\frac14(u^2-1)^2\right)dx\leq\frac\pi2-C_1,\quad C_1=0.001.
	\end{align}	
	Suppose $u'(0)\ge 0$, then
	\begin{align}
	\|u-U_{\operatorname{steady}}\|_{H^1(\mathbb T)}\leq\alpha_1r_*,
	\end{align}
	where $\alpha_1 = O(1)$ is an absolute constant.
\end{lem}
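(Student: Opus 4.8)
The plan is to linearize around $U := U_{\operatorname{steady}}$ and convert smallness of the residual into smallness of $u-U$ via coercivity of the second variation of $E$ on odd functions. Set $\phi = u - U$, which is odd. Subtracting $\ka^2 U'' + U - U^3 = 0$ from the equation for $u$ and expanding $(U+\phi)^3$ yields
\begin{equation}
\mathcal L_U\phi := \ka^2\phi'' + (1-3U^2)\phi = r + 3U\phi^2 + \phi^3 .
\end{equation}
Testing against $\phi$ in $L^2(\mathbb T)$ and integrating by parts gives $\langle \mathcal L_U\phi,\phi\rangle = -E''(U)[\phi,\phi]$, where $E''(U)[\phi,\phi] = \int_{\mathbb T}(\ka^2(\phi')^2 + (3U^2-1)\phi^2)\,dx$. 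Thus everything hinges on two facts: a quantitative coercivity $E''(U)[\phi,\phi]\ge c_0\|\phi\|_{H^1(\mathbb T)}^2$ for all odd $\phi$, with an absolute $c_0>0$; and an a priori qualitative bound $\|\phi\|_{H^1}\le\delta$ that lets the cubic remainder be absorbed.

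First I would establish the qualitative closeness by compactness. The bound \eqref{C1eee0} controls $\|u'\|_2$ and $\|u^2-1\|_2$, so $u$ is bounded in $H^1(\mathbb T)\hookrightarrow L^\infty(\mathbb T)$; inserting this into $\ka^2 u'' = r - u + u^3$ bounds $u$ in $H^2(\mathbb T)$. If the claim failed, there would be functions $u_n$ with residuals $r_n\to 0$ in $L^2$ obeying all hypotheses yet with $\|u_n - U\|_{H^1}$ bounded below; passing to an $H^2$-weak and $C^1$-strong limit produces an exact odd bounded steady state $u_\infty$ with $E(u_\infty)\le\frac\pi2 - C_1$ and $u_\infty'(0)\ge 0$. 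For $\ka = 0.9$ we have $m_\ka = 1$, so the classification in Theorem \ref{2.mainthm} leaves only $u_\infty\in\{0,\pm U\}$; the energy cap excludes $u_\infty\equiv 0$ (whose energy is exactly $\frac\pi2$), and $U'(0)>0$ together with $u_\infty'(0)\ge 0$ excludes $-U$. Hence $u_\infty = U$, a contradiction, so $\|u - U\|_{H^1}$ can be driven below any prescribed $\delta$ by taking $r_*$ beneath an absolute threshold.

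The crux, and the step I expect to be hardest, is the coercivity. Because the potential $3U^2 - 1$ is even and $U(\pi - x) = U(x)$, restricting $-\mathcal L_U = -\ka^2\partial_{xx} + (3U^2 - 1)$ to odd functions is exactly the Dirichlet problem for this Schr\"odinger operator on $[0,\pi]$, and $c_0$ is its lowest Dirichlet eigenvalue $\mu_0^D$. That $\mu_0^D\ge 0$ is immediate, since $U$ minimizes $E$ among odd $2\pi$-periodic functions (the remark after Definition \ref{def3.1}), so the second variation is nonnegative in odd directions. The delicate point is strictness, $\mu_0^D>0$, i.e.\ nondegeneracy: I must rule out an odd periodic element of $\ker\mathcal L_U$. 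Differentiating the steady-state equation gives $\mathcal L_U U' = 0$, with $U'$ even and possessing exactly the two zeros $\pm\frac\pi2$; the task is to show that the linearly independent solution of $\mathcal L_U\psi = 0$ is not $2\pi$-periodic, equivalently that $0$ is a simple periodic eigenvalue (the Floquet monodromy at energy $0$ being a nontrivial Jordan block rather than the identity). For the fixed value $\ka = 0.9$ this can be checked from the explicit elliptic-function (Lam\'e-type) form of $U$ and its linearization, or quoted from \cite{LQTW_2}; since $\ka$ is fixed away from the degenerate limit $\ka\to 1$, the resulting $c_0 = \mu_0^D$ is a genuine absolute constant.

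With both ingredients available the estimate closes at once. Testing the linearized identity against $\phi$ and using coercivity,
\begin{equation}
c_0\|\phi\|_{H^1}^2 \le -\langle r,\phi\rangle - \langle 3U\phi^2 + \phi^3,\phi\rangle \le \|r\|_2\|\phi\|_2 + C\big(\|U\|_\infty + \|\phi\|_\infty\big)\|\phi\|_\infty\|\phi\|_2^2 .
\end{equation}
Since $\|\phi\|_\infty\lesssim\|\phi\|_{H^1}\le\delta$ in one dimension, the last term is at most $C'\delta\|\phi\|_{H^1}^2$; choosing $\delta$ (hence the threshold on $r_*$) so small that $C'\delta\le c_0/2$ absorbs it on the left, leaving $\tfrac{c_0}2\|\phi\|_{H^1}^2\le r_*\|\phi\|_{H^1}$. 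This gives $\|u - U_{\operatorname{steady}}\|_{H^1(\mathbb T)}\le\alpha_1 r_*$ with $\alpha_1 = 2/c_0 = O(1)$, as claimed.
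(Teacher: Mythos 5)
Your argument is correct in outline, but it takes a genuinely different route from the paper. The paper treats the problem as a perturbed Cauchy problem for the ODE: it compares $u$ with the exact solution $w$ of $\ka^2w''+w-w^3=0$ sharing the same initial data $w(0)=0$, $w'(0)=u'(0)$, obtains $\|u-w\|_{H^1}\le O(1)\,r_*$ by continuous dependence on the residual, and then uses the energy cap $E(w)\le\frac\pi2-0.001$ together with the classification theorem (Theorem \ref{2.mainthm}) to force $w=U_{\operatorname{steady}}$; no spectral information is needed, but the identification of $w$ hinges entirely on the classification of steady states (and implicitly on $w$ being $2\pi$-periodic). You instead linearize around $U_{\operatorname{steady}}$, obtain qualitative closeness by a compactness/contradiction argument (which itself routes through the same classification theorem), and then close the estimate quantitatively via coercivity of the second variation on odd functions. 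Your scheme is more robust and would survive in settings where shooting/Gronwall comparisons are unavailable, but it requires the nondegeneracy of the linearized operator restricted to odd periodic functions --- exactly the nontrivial spectral input that the paper defers to \cite{LQTW_2} in Lemma \ref{lem-stability} --- whereas the paper's proof of this particular lemma gets by without it. Two small points to tighten: the classification yields $\pm U_{\ka}(\cdot+c)$ for arbitrary shifts $c$, so you should note that oddness of the limit forces $U_{\ka}(c)=0$ and hence reduces the candidates to $\pm U_{\operatorname{steady}}$ before invoking $u_\infty'(0)\ge0$; and the lowest Dirichlet eigenvalue only gives $E''(U)[\phi,\phi]\ge\mu_0^D\|\phi\|_2^2$, so a one-line interpolation (splitting off a small multiple of the quadratic form to recover $\|\phi'\|_2^2$) is needed to upgrade this to the $H^1$-coercivity you use.
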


\begin{proof}
	We only sketch the details. Denote $u'(0)\ge 0$ and observe that we have $\|u'\|_\infty=O(1)$. Consider
	\begin{equation} \label{4.12Ad0}
		\begin{cases}
			\ka^2u''+u-u^3=r,\\
			u(0)=0,\quad u^{\prime}(0)=p,
		\end{cases}	
		\quad
		\begin{cases}
			\ka^2w''+w-w^3=0,\\
			w(0)=0,\quad w^{\prime}(0)=p.
		\end{cases}		
	\end{equation}
	Note that here we use the letter $p$ to denote $u^{\prime}(0)$ which is given (i.e. $p$ is not
	a parameter). 	
	Since $u$ and $w$ have the same initial conditions, we obtain that
	\begin{align}\|u-w\|_{H^1(\mathbb T)}\leq O(1)r_*,\quad |E(u)-E(w)|\ll1.\end{align}	
	Clearly $E(w)\le\frac\pi2-0.001.$ By using the classification of steady state \cite{LQTW_2}, we obtain that $w$ must coincide with $U_{\operatorname{steady}}$. In particular, this shows that
	 $p=u^{\prime}(0)$ in
	\eqref{4.12Ad0} cannot be arbitrary and has to coincide with the
	ground state. The desired result follows easily.	
\end{proof}

\begin{lem}\label{lem-stability}
Fix $\ka=0.9$ and assume $U_{\mathrm{steady}}$ is the unique odd zero-up ground state for the equation
$$\ka^2u''+u-u^3=0\quad \mbox{on}\quad \mathbb T=[-\pi,\pi].$$
Suppose that $u\in H^1(\mathbb T)$ is an odd function on $\mathbb{T}$ with
	\begin{align}
	E(u)=\int_{\mathbb T}\left(\frac12\kappa^2(u')^2+\frac14(u^2-1)^2\right)dx\le\frac\pi2-C_1,\quad C_1=0.001.
	\end{align}	
Then we have
\begin{equation}
\label{4.com}
\min\{\|u-U_{\mathrm{steady}}\|_{H^1(\mathbb{T})},~
\|u+U_{\mathrm{steady}}\|_{H^1(\mathbb{T})}\}
\leq C_2\sqrt{E(u)-E(U_{\operatorname{steady}})},
\end{equation}	
where $C_2>0$ is an absolute constant.
\end{lem}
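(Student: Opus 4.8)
The plan is to establish a coercivity (spectral gap) estimate for $E$ at the ground states $\pm U$, where I abbreviate $U := U_{\operatorname{steady}}$, and then to promote it to the stated bound through a compactness localization. Write $\eta := E(u) - E(U)$. First I note $\eta \ge 0$: for $\ka = 0.9$ we have $m_\ka = 1$, so the classification in \cite{LQTW_2} (Theorem \ref{2.mainthm}) shows that the only odd bounded steady states are $0$ and $\pm U$; since $E(0) = \tfrac\pi2$ while $E(U) < \tfrac\pi2$, a direct-method argument identifies $E(U)$ with $\inf\{E(v) : v \in H^1(\mathbb T)\ \text{odd}\}$, attained exactly at $\pm U$. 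I will treat the regimes $\eta \le \eta_0$ and $\eta > \eta_0$ separately. The large-excess regime is immediate: $E(v) \le \tfrac\pi2$ forces a uniform bound $\|v\|_{H^1(\mathbb T)} \le M$ on all admissible $v$, so $\min\{\|u - U\|_{H^1}, \|u + U\|_{H^1}\} \le M + \|U\|_{H^1} \le (M + \|U\|_{H^1})\eta_0^{-1/2}\sqrt{\eta}$ whenever $\eta > \eta_0$.

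The heart of the matter is the second variation at the minimizer. A direct computation gives $E''(U)[\phi,\phi] = \langle L\phi, \phi\rangle$ with $L := -\ka^2\partial_{xx} + (3U^2 - 1)$. Differentiating the steady equation yields $L U' = 0$, and since $U$ is odd, $U'$ is even; hence the neutral (translation) mode of $L$ lies entirely outside the odd subspace. On odd $\phi$, minimality of $U$ gives $\langle L\phi, \phi\rangle \ge 0$, and I claim this is in fact strict, $\langle L\phi, \phi\rangle \ge \lambda_0 \|\phi\|_2^2$ with $\lambda_0 > 0$: the spectrum of $L$ is discrete, so it suffices that $L$ have trivial kernel on the odd subspace, i.e. that the only periodic null solution be the even function $U'$. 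Granting this, combining the $L^2$ gap with the crude pointwise bound $3U^2 - 1 \ge -1$ and interpolating the two inequalities produces the $H^1$ coercivity $\langle L\phi, \phi\rangle \ge c_0\|\phi\|_{H^1(\mathbb T)}^2$ for all odd $\phi$, with $c_0 > 0$.

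With coercivity available I localize. A compactness argument shows that for each $\delta > 0$ there is $\eta_0 > 0$ so that $\eta \le \eta_0$ implies $\min\{\|u - U\|_{H^1}, \|u + U\|_{H^1}\} \le \delta$: any sequence with $E(u_n) \to E(U)$ is $H^1$-bounded, converges weakly in $H^1$ and strongly in $C^0$ to an odd limit $u_*$, and weak lower semicontinuity together with convergence of the potential term forces $E(u_*) = E(U)$ and $\|u_n'\|_2 \to \|u_*'\|_2$, whence strong $H^1$ convergence to a minimizer $u_* = \pm U$. Fixing $\delta$ small and, using the symmetry $v \mapsto -v$, assuming $u$ lies near $+U$, I write $u = U + \phi$ with $\phi$ odd, $\|\phi\|_{H^1} \le \delta$. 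Since $E'(U) = 0$, Taylor expansion gives $E(u) = E(U) + \tfrac12\langle L\phi, \phi\rangle + R(\phi)$, where $R$ gathers the cubic and quartic terms of the quartic potential and satisfies $|R(\phi)| \le C(\|\phi\|_{H^1}^3 + \|\phi\|_{H^1}^4)$ by the one-dimensional embedding $H^1(\mathbb T) \hookrightarrow L^\infty(\mathbb T)$. Shrinking $\delta$ to absorb $R$ into the quadratic term then yields $\eta \ge \tfrac{c_0}{4}\|\phi\|_{H^1}^2$, i.e. $\|\phi\|_{H^1} \le \sqrt{4/c_0}\,\sqrt{\eta}$; taking $C_2$ to be the larger of this constant and the one from the large-excess regime finishes the proof.

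I expect the decisive obstacle to be the strict positivity $\lambda_0 > 0$ of $L$ on the odd subspace. This is not merely technical: since $\lambda_0 = 0$ would degrade the estimate to exponent $\tfrac14$, the lemma with the square root is essentially equivalent to the nondegeneracy of $U$ along odd directions. Isolation of the minimizer (uniqueness up to sign) does not by itself rule out a flat direction, so verifying triviality of the odd kernel will require the concrete structure of $U_{0.9}$ — via the explicit first integral \eqref{2e2}, an ODE/Wronskian comparison using that $U'$ has exactly two simple zeros, or a nondegeneracy statement borrowed from \cite{LQTW_2}.
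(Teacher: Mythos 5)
The paper does not actually prove this lemma: its entire ``proof'' is a one-line deferral to ``a general (nontrivial) spectral estimate established in \cite{LQTW_2}.'' Your proposal is, in effect, a reconstruction of what that outsourced argument must contain, and the reduction you carry out is sound: the large-excess regime via the uniform $H^1$ bound forced by $E(u)\le \pi/2$, the identification of $\pm U_{\operatorname{steady}}$ as the unique odd minimizers (consistent with Remark 2.1 and Theorem \ref{2.mainthm} with $m_{0.9}=1$), the compactness localization to a $\delta$-neighborhood, the Taylor expansion with cubic--quartic remainder controlled by $H^1(\mathbb T)\hookrightarrow L^\infty(\mathbb T)$, and the upgrade from an $L^2$ gap to $H^1$ coercivity by interpolating against $\langle L\phi,\phi\rangle \ge \ka^2\|\phi'\|_2^2-\|\phi\|_2^2$ are all correct. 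You have also correctly diagnosed where the real content lies: everything reduces to the strict positivity of $L=-\ka^2\partial_{xx}+3U_{\operatorname{steady}}^2-1$ on the odd subspace, i.e.\ the nondegeneracy of the ground state along odd directions, and you are right that isolation of the minimizer alone does not exclude a flat direction, so the square-root exponent in \eqref{4.com} is genuinely equivalent to this spectral gap.

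That one ingredient is the only thing separating your write-up from a complete proof, and it is precisely the ``nontrivial spectral estimate'' the paper cites from \cite{LQTW_2}; so as a blind attempt your proposal is at least as complete as the paper's own argument, but it is not self-contained. If you want to close it without the citation, the route you sketch is viable: minimality among odd functions already gives $L\ge 0$ on the odd subspace, so an odd null function $\phi$ would, together with the even null function $U_{\operatorname{steady}}'$, force the periodic kernel of the Hill operator $L$ to be two-dimensional (a doubly degenerate periodic eigenvalue at $0$); ruling this out via the explicit first integral \eqref{2e2} and a Wronskian/Sturm comparison using that $U_{\operatorname{steady}}'$ has exactly two simple zeros at $x=\pm\pi/2$ is the natural way to finish. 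Until that is done, the proposal should be regarded as a correct reduction with one imported (or still-open) lemma rather than a complete proof.
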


\begin{proof}
This follows from a general (nontrivial) spectral estimate established in \cite{LQTW_2}.
\end{proof}

With the above two lemmas, we now present the main theorem of this section.

\begin{thm}
	Assume the initial state $u^0:\mathbb T\to\mathbb R$ is odd, $\|u^0\|_\infty\le 1$ and
	\begin{align} \label{Eu0.002}
	E(u^0)<\frac\pi2-0.002.\end{align}	
	Suppose that the machine error functions $\{\eps^n\}_{n\geq0}$ are odd and satisfy \eqref{fil2}. Let $\ka = 0.9$ and consider \eqref{fil1} with $\tau_*\le\tau\leq\frac1{2}$, where $\tau_*= \sqrt{\eps_*}$. Then the following hold.
	\begin{enumerate}
		\item [(1)] $E(U_{\operatorname{steady}})\leq E(v^{n+1})\leq\frac\pi2-0.001,$~ $\forall\,n\geq0.$
		\item [(2)] For $n$ sufficiently large, we have
		\begin{align}
		\label{4.neb}
		\min\left\{\|v^{n}-U_{\operatorname{steady}}\|_{H^1(\mathbb T)},~
		\|v^{n}+U_{\operatorname{steady}}\|_{H^1(\mathbb T)}\right\}\leq\tilde\alpha\eps_*^{\frac18},\end{align}
		where $\tilde \alpha = O(1)$ is an absolute constant.
	\end{enumerate}
\end{thm}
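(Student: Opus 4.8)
The plan is to reduce the whole statement to two quantitative facts about a single step of \eqref{fil1} and then run an energy-barrier induction.

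\emph{Step 1 (one-step energy law).} First I would establish a \emph{modified energy stability} for the clean map $\mathcal N$. Testing the scheme defining $w=\mathcal N(v)$ against $w-v$, integrating by parts, and Taylor-expanding $\tfrac14(s^2-1)^2$, one gets, using $\tau\le\frac12$ together with a maximum-principle bound $\|v\|_\infty,\|w\|_\infty\le 1+O(\eps_*)$ (which holds because $(1-\ka^2\tau\partial_{xx})^{-1}$ is an $L^\infty$-contraction and $s\mapsto(1+\tau)s-\tau s^3$ maps $[-1,1]$ into itself for $\tau\le\frac12$),
\[
E(\mathcal N(v))\le E(v)-c_0\,\|\mathcal N(v)-v\|_{H^1(\mathbb T)}^2,\qquad c_0=\tfrac{\ka^2}{2}.
\]
Combined with the elementary perturbation bound $|E(u+\eps^{n+1})-E(u)|\le C_3\eps_*$ (valid on the energy sublevel set, where $\|u\|_\infty$ is controlled), this yields the master inequality
\[
E(v^{n+1})\le E(v^n)-c_0\|u^{n+1}-v^n\|_{H^1}^2+C_3\eps_*.
\]
The lower bound in (1) is then immediate: each $v^{n+1}$ is odd (oddness is preserved by $\mathcal N$ and all $\eps^n$ are odd), and $U_{\operatorname{steady}}$ minimizes $E$ among odd $2\pi$-periodic $H^1$ functions, so $E(v^{n+1})\ge E(U_{\operatorname{steady}})$.

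\emph{Step 2 (trading derivatives for $\tfrac1\tau$).} The key structural identity is that the steady-state residual of the \emph{smooth} iterate $u^{n+1}=\mathcal N(v^n)$ can be read off from the scheme with no loss of regularity:
\[
r_{n+1}:=\ka^2(u^{n+1})''+u^{n+1}-(u^{n+1})^3=\Big(\tfrac1\tau+1\Big)(u^{n+1}-v^n)+\big((v^n)^3-(u^{n+1})^3\big),
\]
whence $\|r_{n+1}\|_2\lesssim\tfrac1\tau\|u^{n+1}-v^n\|_2$. Feeding this into the drop $E(v^n)-E(u^{n+1})\gtrsim\tfrac1\tau\|u^{n+1}-v^n\|_2^2$ gives $E(v^n)-E(u^{n+1})\gtrsim\tau\|r_{n+1}\|_2^2$. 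Hence at any \emph{near-stationary} step, i.e.\ one whose energy drop is $\lesssim\eps_*$, the lower bound $\tau\ge\tau_*=\sqrt{\eps_*}$ forces $\|r_{n+1}\|_2\lesssim\eps_*^{1/4}$, and Lemma~\ref{lemfil3} (applied to $u^{n+1}$ or $-u^{n+1}$ according to the sign of $(u^{n+1})'(0)$) places $u^{n+1}$ within $O(\eps_*^{1/4})$ in $H^1$ of $\pm U_{\operatorname{steady}}$, so $E(u^{n+1})\le E(U_{\operatorname{steady}})+O(\eps_*^{1/4})$.

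\emph{Step 3 (barrier for (1) and descent for (2)).} Both conclusions follow from one dichotomy. Since $\ka=0.9$ gives $E(U_{\operatorname{steady}})<\frac\pi2-0.001$ by a definite margin, any $v^n$ whose energy lies in the danger band just below $\frac\pi2-0.001$ has gap $\gamma_n:=E(v^n)-E(U_{\operatorname{steady}})\ge c_\#>0$ for an absolute $c_\#$. Such a step cannot be near-stationary: if it were, Step 2 would give $E(v^n)\le E(U_{\operatorname{steady}})+O(\eps_*^{1/4})<E(U_{\operatorname{steady}})+c_\#$, a contradiction; hence its drop exceeds $C_3\eps_*$, the master inequality gives \emph{strict} descent, and $\{E\le\frac\pi2-0.001\}$ is forward invariant, proving (1). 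Running the same dichotomy at the threshold $K\eps_*^{1/4}$ with $K$ a large absolute constant shows that whenever $\gamma_n>K\eps_*^{1/4}$ the drop is at least a fixed multiple of $\eps_*$, so after $O(\eps_*^{-1})$ steps the gap falls below $K\eps_*^{1/4}$; as a single step raises $E$ by at most $C_3\eps_*\ll\eps_*^{1/4}$, the gap then stays $\lesssim\eps_*^{1/4}$ for all large $n$. Applying Lemma~\ref{lem-stability} to these $v^n$ gives
\[
\min\{\|v^n-U_{\operatorname{steady}}\|_{H^1},\,\|v^n+U_{\operatorname{steady}}\|_{H^1}\}\le C_2\sqrt{\gamma_n}\lesssim\eps_*^{1/8},
\]
which is \eqref{4.neb}.

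\emph{Main obstacle.} The delicate point is Step 2: recovering a genuinely small $L^2$ residual (the hypothesis of Lemma~\ref{lemfil3}) from a small one-step increment, despite the derivative loss in inverting $(1-\ka^2\tau\partial_{xx})$. This works only because $\tau\ge\sqrt{\eps_*}$, which is exactly what converts an $O(\eps_*)$ energy drop into an $O(\eps_*^{1/4})$ residual. The exponent $\tfrac18$ is the (deliberately non-optimal) price of the last step: for a generic large $n$ one controls only the energy gap and not the residual, so the distance to $\pm U_{\operatorname{steady}}$ must be extracted through the square-root bound of Lemma~\ref{lem-stability}, which turns the $\eps_*^{1/4}$ gap into $\eps_*^{1/8}$.
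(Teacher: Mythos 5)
Your proposal is correct and follows essentially the same route as the paper's proof: the discrete energy inequality with the $\frac1\tau\|u^{n+1}-v^n\|_2^2$ dissipation term, the residual identity $r_{n+1}=\frac{u^{n+1}-v^n}{\tau}+f(v^n)-f(u^{n+1})$ feeding Lemma \ref{lemfil3} at near-stationary steps, a descent/forward-invariance dichotomy that yields part (1) and traps the energy gap at $O(\eps_*^{1/4})$ after the first near-stationary step, and Lemma \ref{lem-stability} to convert the gap into the $\eps_*^{1/8}$ bound. The only discrepancies are cosmetic: your Step 1 ``master inequality'' omits the $\frac1\tau$ weight on the $L^2$ increment that your Step 2 actually relies on (the testing argument does produce it, as in the paper's inequality \eqref{fil6}), and you phrase the dichotomy via the energy drop where the paper uses the threshold $\|u^{n+1}-v^n\|_2/\tau\le\delta_*^{(1)}=\beta\eps_*^{1/4}$ — these are equivalent up to constants.
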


\begin{proof}
By a recent result in
\cite{Li2021}, for any $\tau_*\le \tau \le \frac 12$, we have
\begin{align*}
\sup_{n\ge 0} \max\{\|u^n\|_{\infty}, \;\|v^n\|_{\infty}\} \le 1.1.
\end{align*}
	We begin with the basic energy inequality. Namely for any $\tau_*\leq \tau\leq\frac{1}{2}$ and $w=\mathcal{N}(u)$ with $u$ odd, $E(u)\leq\frac\pi2-0.001$ and $\max\{\|w\|_\infty,\|v\|_\infty\}\le 1.1$, we have
	\begin{equation}
		\label{fil6}
		\frac{1}{5\tau}\|w-u\|_2^2+\frac12\ka^2\|\partial_x(w-u)\|_2^2+E(w)
		\leq E(u).
	\end{equation}	
	The main inductive assumption is 	
\begin{equation} \label{4e20}
     E(v^n)\leq\frac\pi2-0.001.
\end{equation}
	Clearly, this holds at the base step $n=0$. We now assume it holds at step $n$ and consider the solution at $n+1$ given by
	\begin{align}v^{n+1}=u^{n+1}+\eps^{n+1}=\mathcal N(v^n)+\eps^{n+1}.\end{align}
	Now we discuss two cases.
	
\vskip .25cm

	\noindent Case 1: ${\|u^{n+1}-v^{n}\|_2}/{\tau}\leq\delta_*^{(1)},$ where $\delta_*^{(1)}\ll1$ is some suitably small constant to be specified at the end of the proof. Note that by using $u^{n+1}=\mathcal{N}(v^n)$ and the fact that $E(v^n)\leq\frac\pi2-0.001$, we have
	\begin{align}E(u^{n+1})\leq E(v^n)\quad\mathrm{and}\quad \|u^{n+1}\|_{H^1(\mathbb T)}=O(1).\end{align}
	Thus
	\begin{equation}
		\ka^2\partial_{xx}u^{n+1}-f(u^{n+1})=\frac{u^{n+1}-v^n}{\tau}+f(v^n)-f(u^{n+1})
		=r^{n+1}(x),	
	\end{equation}
	where $\|r^{n+1}\|_2\ll1 $. By Lemma \ref{lemfil3} we obtain
	\begin{equation}
		\min\left\{\|u^{n+1}-U_{\operatorname{steady}}\|_{H^1(\mathbb T)},~
		\|u^{n+1}+U_{\operatorname{steady}}\|_{H^1(\mathbb T)}\right\}\leq\alpha\delta_*^{(1)},
	\end{equation}
	where $\alpha>0$ is an absolute constant (note that in the more general case, $\alpha$ should depend on $\ka$). Thus in this case we have
	\begin{equation}
	\label{4.31}
		\min\left\{\|v^{n+1}-U_{\operatorname{steady}}\|_{H^1(\mathbb T)},~
		\|v^{n+1}+U_{\operatorname{steady}}\|_{H^1(\mathbb T)}\right\}\leq \alpha\delta_*^{(1)}+\eps_*.
	\end{equation}
	Clearly then $E(v^{n+1})\leq\frac\pi2-0.001$.
	
\vskip .25cm
	\noindent Case 2: ${\|u^{n+1}-v^{n}\|_2}/{\tau}>\delta_*^{(1)}.$ By Eq. \eqref{fil6} we have
	\begin{eqnarray}\label{4.27-0}
			E(v^n)
			&\geq& E(u^{n+1})+\frac{1}{5\tau}\|u^{n+1}-v^n\|_2^2\nonumber\\
			&\geq&  E(u^{n+1})+\frac{1}{10\tau}(\delta_*^{(1)})^2\tau^2 +\frac{1}{10\tau}\|u^{n+1}-v^n\|_2^2\nonumber \\
			&\geq&	E(v^{n+1})+O(\eps_*)+\frac{1}{10}(\delta_*^{(1)})^2\tau +\frac{1}{10\tau}\|u^{n+1}-v^n\|_2^2.
	\end{eqnarray}
	Thus if
	\begin{align}\label{4.28-0}
	O(\eps_*)+\frac{1}{10}(\delta_*^{(1)})^2\tau>0,
	\end{align}
	then we obtain
	\begin{align}\label{4.29-0}
	E(v^n)\geq E(v^{n+1})+\frac{1}{10\tau}\|u^{n+1}-v^n\|_2^2.\end{align}
	Thus for all cases we have proved (\ref{4e20}).
	With \eqref{4e20} in hand, we now claim that there exists some $n_0\ge 1$ such that Case 1 holds at $n=n_0$, i.e.
	\begin{align}\label{4.31-0}
	\frac{\|u^{n+1}-v^{n}\|_2}{\tau}\leq\delta_*^{(1)}.
	\end{align}
	Assume this is not true, then we are always in Case 2.
	By \eqref{4.29-0} we obtain
	\[
	\frac{1}{10\tau} \sum_{n=1}^\infty \|u^{n+1}-v^n\|_2^2<\infty.
	\]
	This clearly implies the existence of $n_0$. Thus the claim \eqref{4.31-0} holds.
	
	Next, we show that if for some $n_0$, \eqref{4.31-0} holds, then the energy of all future $v^{n+1},~n\ge n_0$ will remain in the vicinity of the ground state, i.e.
	\begin{align}\label{4.33-0}
	E(v^{n+1})\leq E(U_{\mathrm{steady}}) + C_3\cdot (\alpha \delta_*^{(1)}+ \eps_*),
	\end{align}
	where $C_3>0$ is an absolute constant.
	Indeed, \eqref{4.33-0} holds for $n=n_0$ and any $m>n_0$ which is in Case 1. Now for any $m>n_0$ such that $m$ is in Case 2.
	We define
	\[
	m_* = \min\{j: j\le m \mbox{ and any $\widetilde m$ in $[j,m]$ is in Case 2}\}.
	\]
Clearly by monotonicity of energy in Case 2, we have
	\begin{align}
	E(v^{n+1})\le E(v^{m_*+1})\le E(v^{m_*}).
	\end{align}
	Since by definition, $m_*-1$ is in Case 1, we have
	\begin{align}
	E(v^{m_*})\leq E(U_{\mathrm{steady}}) + C_3\cdot (\alpha \delta_*^{(1)}+ \eps_*).
	\end{align}
	thus \eqref{4.33-0} holds for all $v^{n+1}$, $n\geq n_0$.
	It follows from Lemma \ref{lem-stability} that for any $n\ge n_0$,
	\begin{equation}\label{4.37-0}
\min\left\{\|v^{n+1}-U_{\mathrm{steady}}\|_{H^1(\mathbb{T})},~
\|v^{n+1}+U_{\mathrm{steady}}\|_{H^1(\mathbb{T})}\right\}
\leq C_2\sqrt{C_3\cdot (\alpha \delta_*^{(1)}+ \eps_*)}.
	\end{equation}	
	
	To see that \eqref{4.37-0} gives us the desired result, we now specify the parameters used in the proof.
	We take $\tau_* = \eps_*^{\frac 1 2}$ and $\delta_*^{(1)} = \beta\eps_*^{\frac 1 4}$ with $\beta$ being an $O(1)$ constant, then \eqref{4.28-0} is clearly satisfied.
	Plugging the values of $\delta_*^{(1)}$ and $\eps_*$ into \eqref{4.37-0}, we obtain the desired conclusion.
\end{proof}

We close this section by making several remarks relevant to the above theorem and its proof.

\begin{itemize}
\item
The condition \eqref{Eu0.002} is to ensure that the energy of $u^0$ is below
$\pi/2=E(0)$ where $E(0)$ is the energy of $u\equiv 0$.  This simple assumption avoids
the scenario that the solution becomes zero in finite time.
For $u^0 = \sin(x)$ and $\ka =0.9$, we can compute
\begin{align}E(u^0) \approx 1.5327<\frac \pi 2 -0.002.\end{align}
\item
As we have mentioned earlier, the reasons why we take $\kappa=0.9$ are:
1) to ensure the uniqueness of the odd zero-up ground state; 2) to simplify the perturbative
analysis around the ground state;
3) to back up the numerical experiments (see Figure \ref{fig0} and \ref{fig0-2}). In forthcoming works, we shall treat more general
cases.
\item
To simplify the analysis, we did not optimize the parameters such as $\tau_*$ and the other relevant parameters used in the proof.
In view of the smallness of the machine error $\eps_*$, the cut-off $\tau_*$ is quite a realistic assumption since we usually do not adopt exceedingly small time steps in such long time simulations.
Further fine-tuning is certainly  possible but we shall not dwell on this issue here.
\item
From our analysis, it is clear that an improved algorithm can include a stopping criterion when the residual error becomes suitably small. This would also bring drastic simplifications in our proof.
In particular, one does not need to consider the situation where the numerical solution hops between Case 1 and Case 2 intermittently.
\end{itemize}

\section{More numerical experiments}
In the following numerical experiments, the time step is $\tau = 0.01$ and the number of Fourier modes is $N = 256$ by default.
In addition, the ``numerical'' steady state is obtained when the residual error of the scheme
is smaller than $\mathtt {Tol} = 10^{-12}$ or the the time arrives at $t = 10^5$.

\subsection{More 1D Allen-Cahn examples}

\begin{example} \label{exam1}
{\em We consider more initial conditions.
Let the parameter $\ka = 0.1$ and the initial data $u_0 = \sin(x)$, $\frac 1 2\left(\sin(x)+\sin(2x)\right)$, $\frac 1 2\left(\sin(x)+\sin(4x)\right)$ or $\frac 1 2\left(\sin(x)+\sin(8x)\right)$, respectively.
}
\end{example}

The approximate steady states are computed using the filtered first-order IMEX pseudo-spectral method.
In Figure \ref{fig2}, it shows that for these odd initial conditions, $u(x,t)$ converges to the same steady state with period $2\pi$.

\begin{figure}[!h]
\centering
(a) \includegraphics[width=0.44\textwidth]{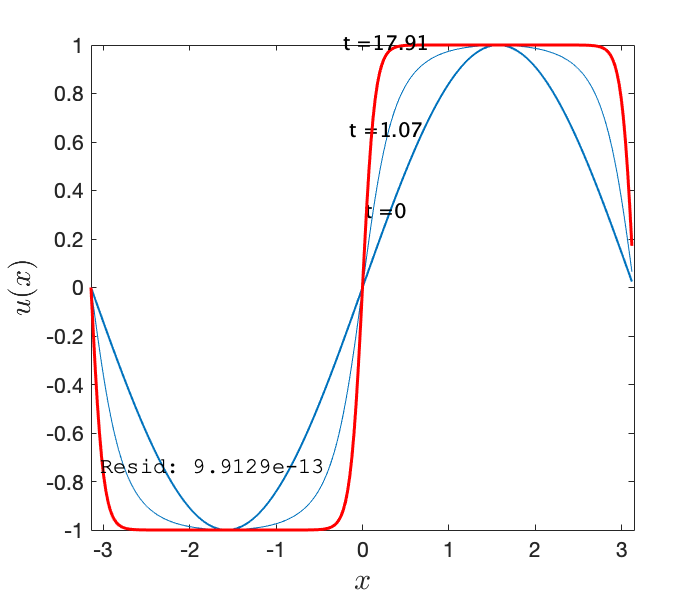}
(b) \includegraphics[width=0.44\textwidth]{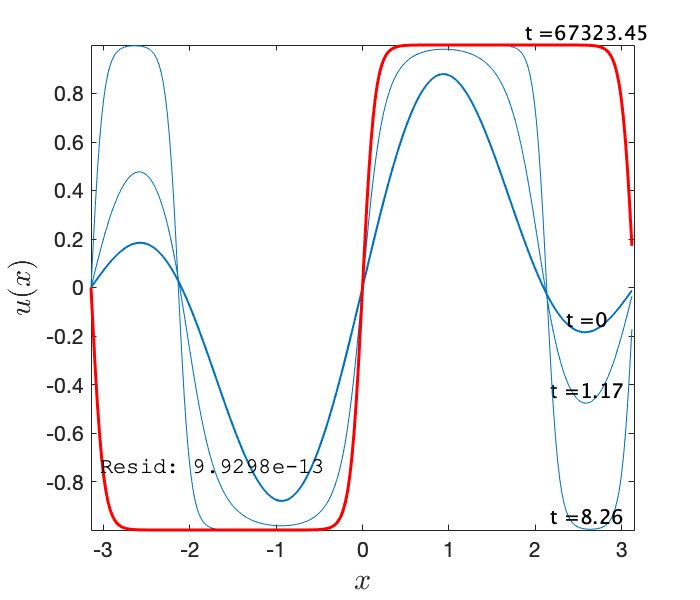}
(c) \includegraphics[width=0.44\textwidth]{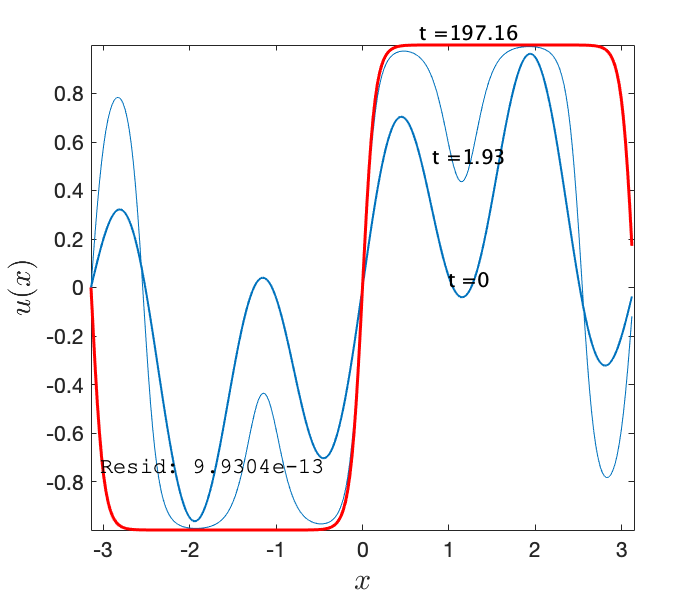}
(d) \includegraphics[width=0.44\textwidth]{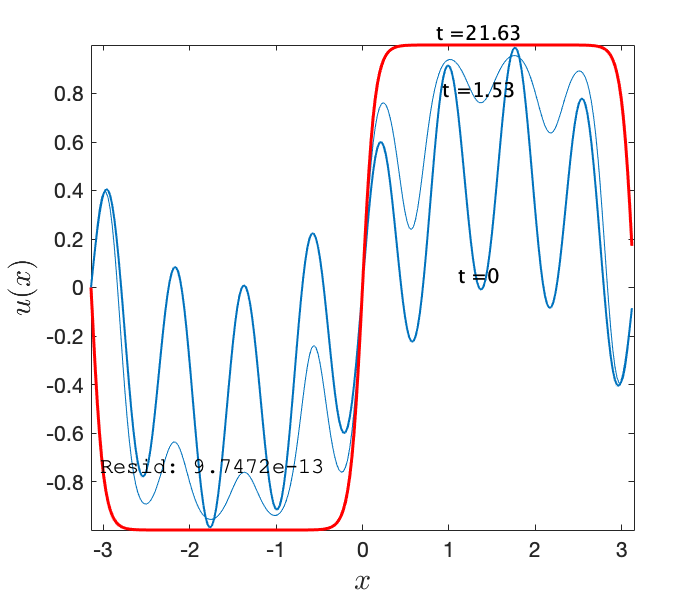}
\caption{\small Example \ref{exam1}: Numerical steady states (red curves) for different initial conditions (bold blue curves): (a): $u_0 = \sin(x)$, (b): $\frac 1 2\left(\sin(x)+\sin(2x)\right)$, (c): $\frac 1 2\left(\sin(x)+\sin(4x)\right)$ and (d): $\frac 1 2\left(\sin(x)+\sin(8x)\right)$.}\label{fig2}
\end{figure}

\begin{example} \label{exam2}
{\em  We consider more parameter of $\ka$.
Let $u_0 = \sin(x)$, $\ka = 0.999$ and $1.001$ respectively.
}
\end{example}

It is observed in Figure \ref{fig1} that when $\ka$ is slightly smaller than $1$, the steady state is nonzero, while when $\ka$ is slightly larger than $1$, the steady state is zero. This is in good agreement with our theory in Section 2.

\begin{figure}[!h]
\centering
(a) \includegraphics[width=0.43\textwidth]{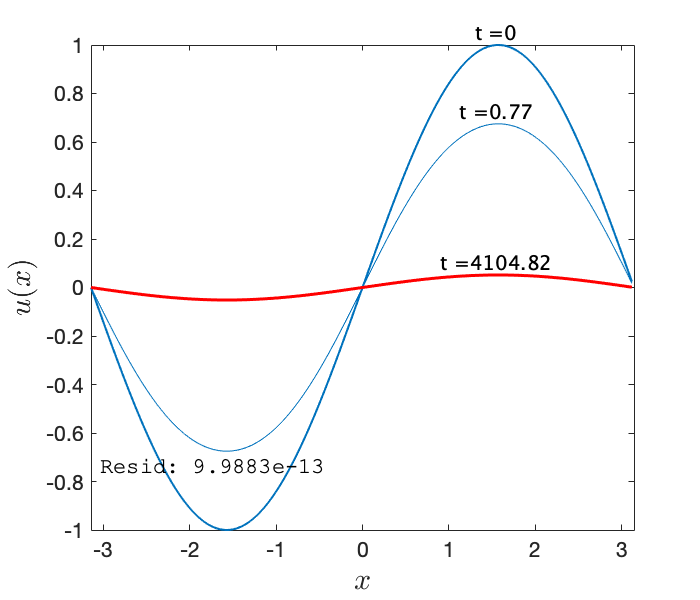}
(b) \includegraphics[width=0.43\textwidth]{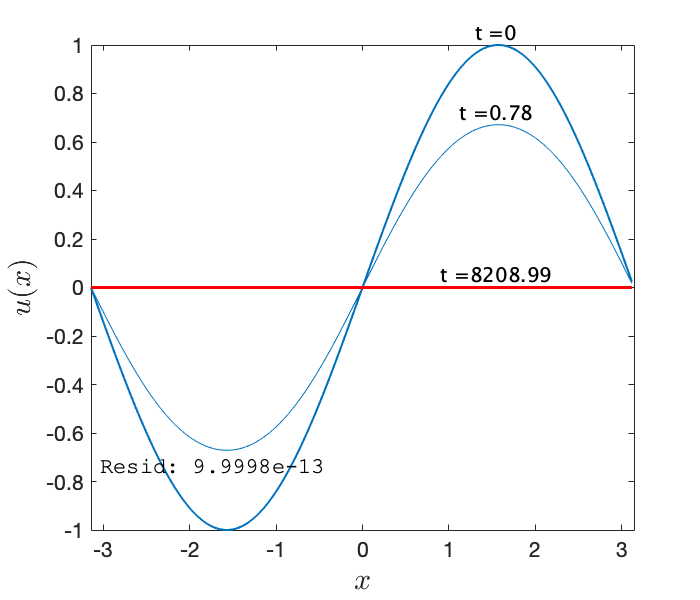}
\caption{\small Example \ref{exam2}: Computed steady states (red curves) respectively in the cases of (a): $\ka = 0.999$, and (b): $\ka = 1.001$. }\label{fig1}
\end{figure}



\begin{example} \label{exam3}
{\em We consider an intricate case of metastable states for the Allen-Cahn equation.
Let $\ka = \sqrt{0.001}$ and take two different initial functions $u_0 = \frac 1 2(\sin(x)+\sin(2x))$ and $u_0=\frac 12 (\sin(x)+\sin(8x))$.
}
\end{example}

It is observed numerically even if the oddness is preserved the numerical solution can get stuck in the metastable states.
In particular, it is seen from the left-hand side of Figure \ref{fig5_1} that for $u_0 = \frac 1 2(\sin(x)+\sin(2x))$, the corresponding solution 
 $u$ is stuck in the metastable state when the residual error reaches $\mathtt{Tol}=10^{-12}$.
Similarly, on the right-hand side for $u_0 =\frac12(\sin(x)+\sin(8x))$,  the corresponding solution  $u$ is stuck in another metastable state.
An interesting issue is to investigate these metastable behaviors.

\begin{figure}[!h]
\centering
(a) \includegraphics[width=0.43\textwidth]{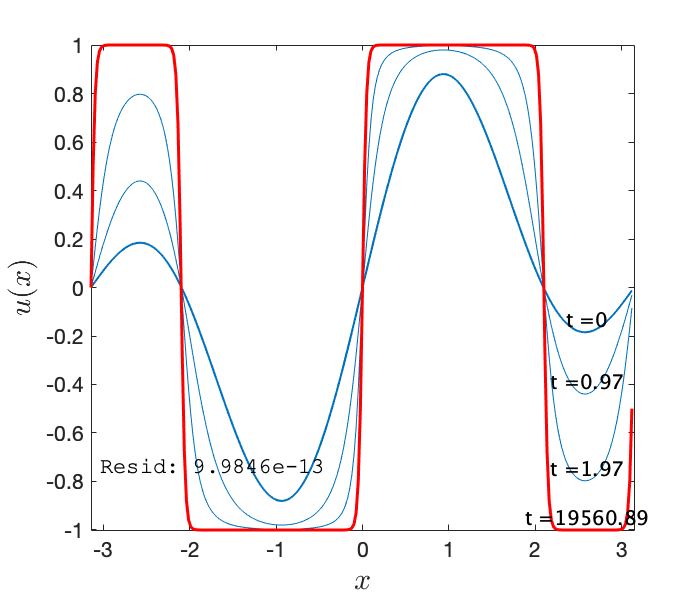}
(b) \includegraphics[width=0.43\textwidth]{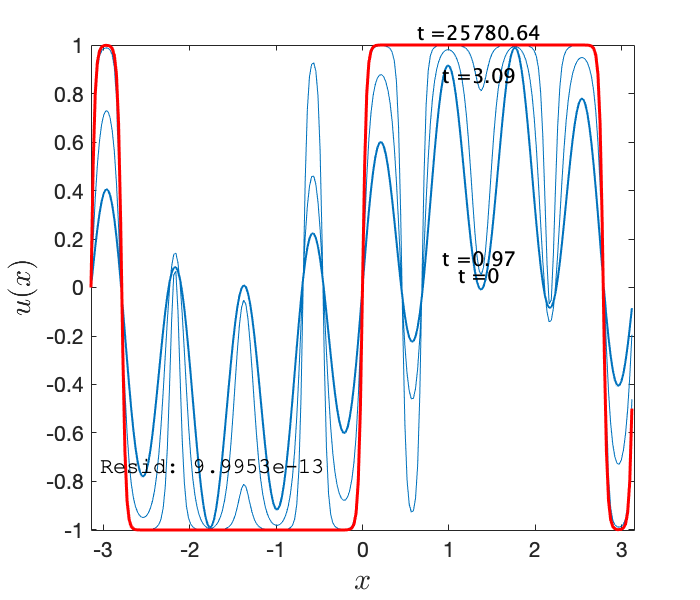}
\caption{\small Example \ref{exam3}: Metastable states (red curves) for (a): $u_0 = \frac 1 2(\sin(x)+\sin(2x))$ and (b): $u_0=\frac 1 2(\sin(x)+\sin(8x))$, with $\ka=\sqrt{0.001}$.}\label{fig5_1}
\end{figure}



\subsection{A 2D Allen-Cahn example}

\begin{example} \label{exam4}
{\em
We consider the 2D Allen-Cahn equation
\begin{align}
\partial_t u = \ka^2\Delta u + u-u^3\quad\mbox{on } \mathbb T^2 = [-\pi,\pi]^2,
\end{align}
with $\ka = 0.1$ and $u_0(x,y)= \sin (x)\sin(y)$.
}
\end{example}

Clearly, $u(x,y,t)$ should have the form
\begin{equation}
u(x,y,t) = \sum_{k_1,k_2\geq 1} c_{k_1,k_2}(t)\sin(k_1 x) \sin(k_2 y),
\end{equation}
where $c_{k_1,k_2}$ is the Fourier coefficient of the $(k_1^{\mathrm {th}},k_2^{\mathrm {th}})$ mode.
This indicates the symmetries $u(x,y,t) = -u(-x,y,t)$ and $u(x,y,t) = -u(x,-y,t)$.
In our computation, we use the first-order-IMEX pseudo-spectral method to solve the 2D Allen-Cahn equation with $\tau = 0.01$ and $N_x\times N_y = 256\times 256$ Fourier modes.

\begin{figure}[!h]
\centering
\includegraphics[width=0.33\textwidth]{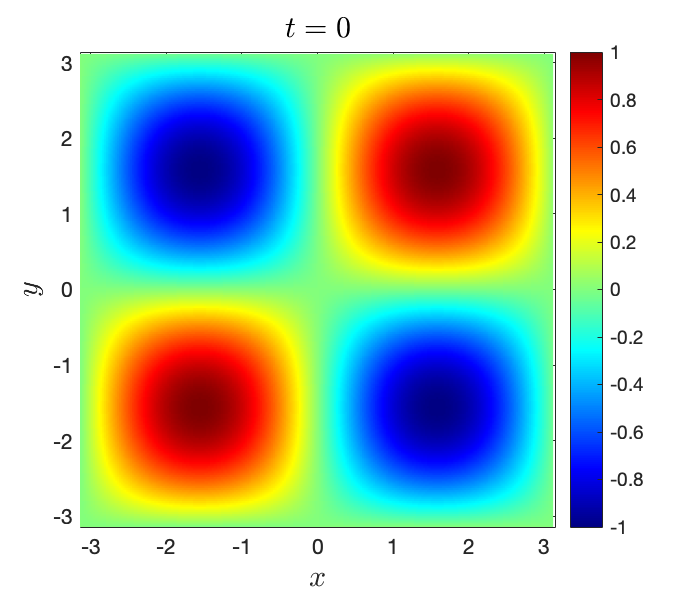}
\includegraphics[width=0.33\textwidth]{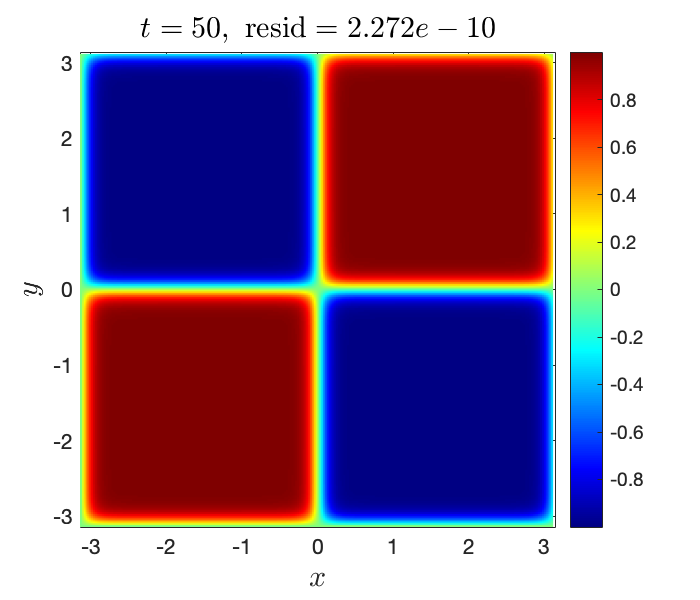}\\
\includegraphics[width=0.33\textwidth]{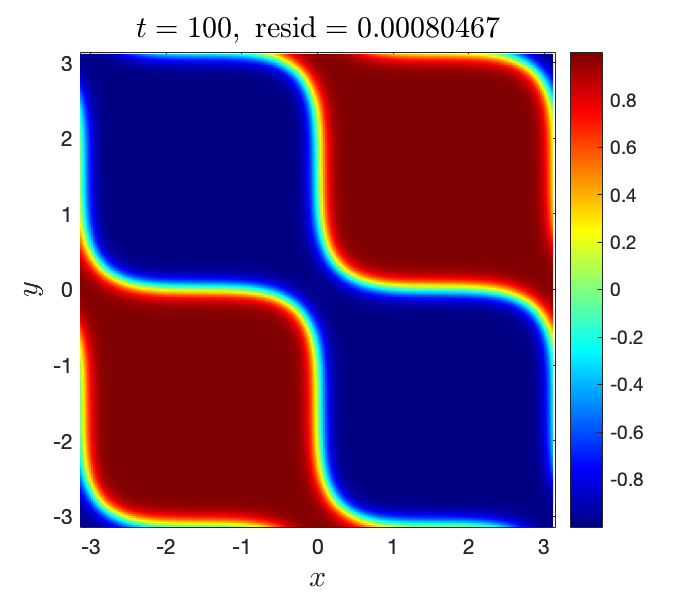}
\includegraphics[width=0.33\textwidth]{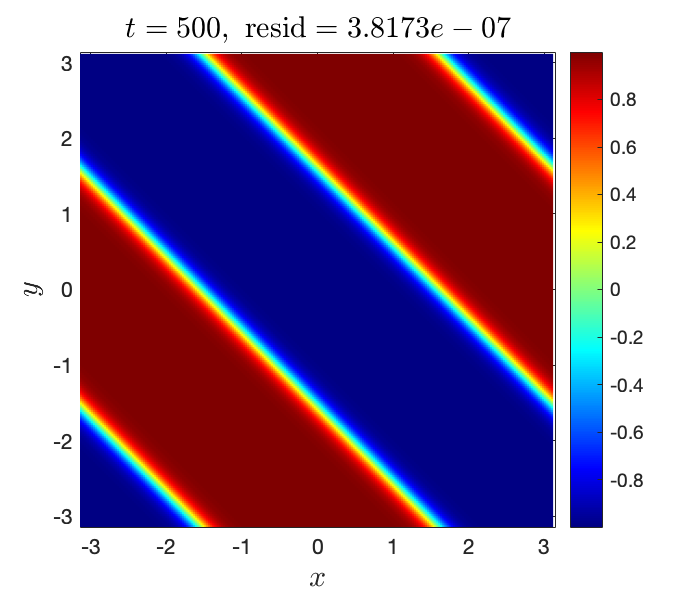}
\caption{\small Example \ref{exam4}: Dynamics of 2D Allen-Cahn equation using the first-order IMEX method without filter (symmetry breaking happens!) where $\ka = 0.1$, $u_0 =\sin(x)\sin(y)$, $\tau= 0.01,~N_x=N_y = 256$. }\label{fig:ac2d1}
\centering
\includegraphics[width=0.33\textwidth]{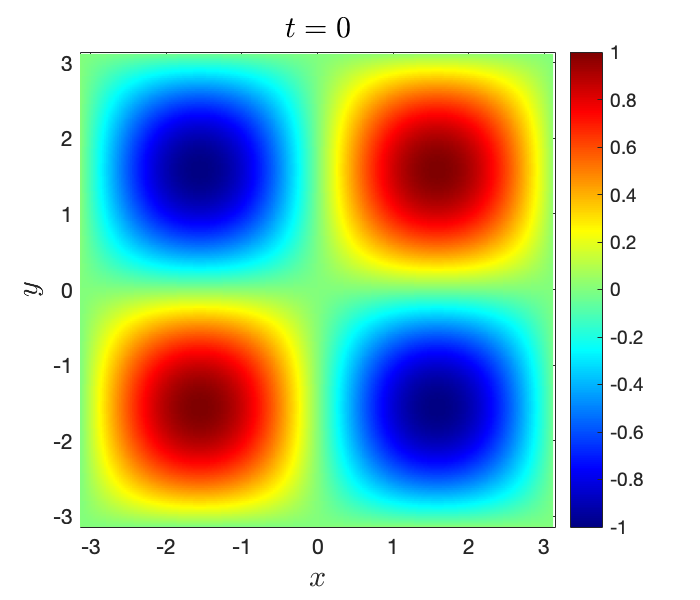}
\includegraphics[width=0.33\textwidth]{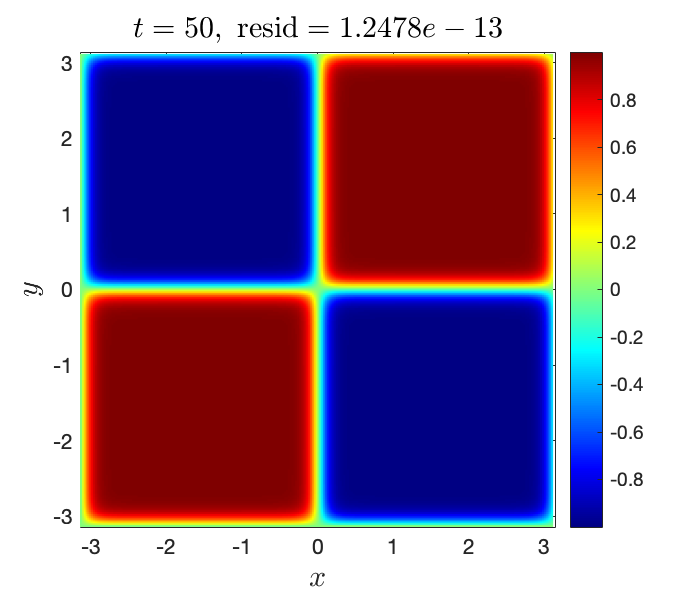}\\
\includegraphics[width=0.33\textwidth]{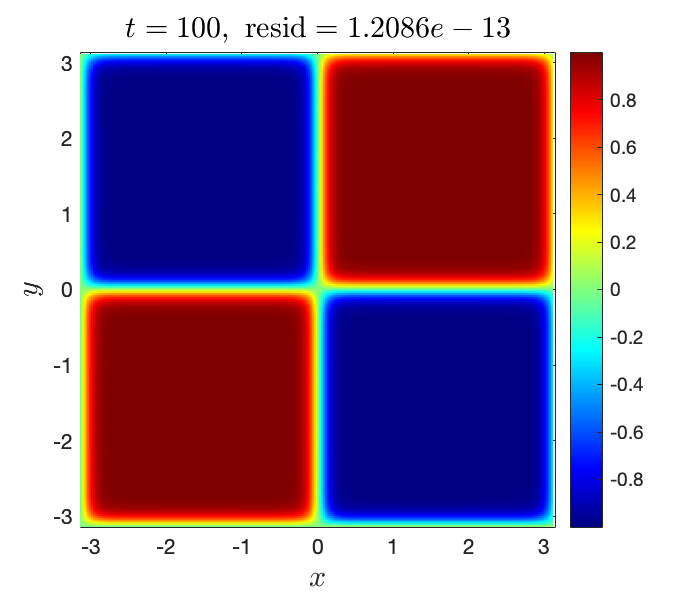}
\includegraphics[width=0.33\textwidth]{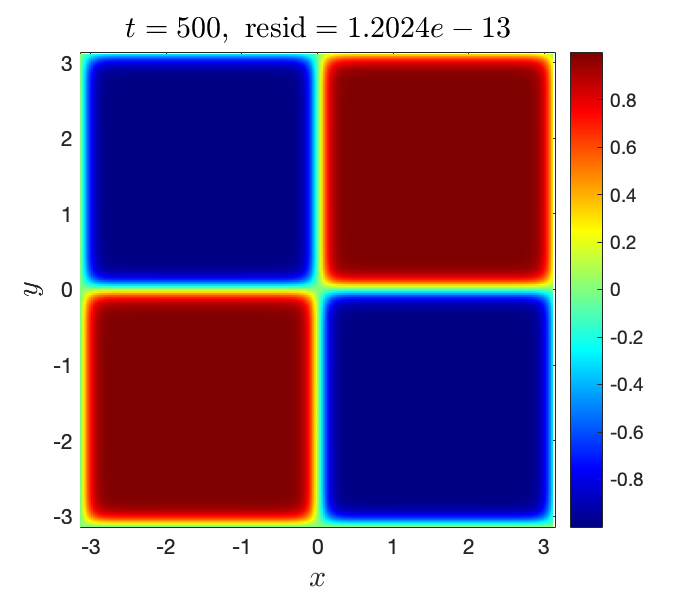}
\caption{\small Example \ref{exam4}: Same as Figure \ref{fig:ac2d1}, except with filter \eqref{5.3}. }\label{fig:ac2d2}
\end{figure}

Then, we impose the following 2D symmetry-preserving Fourier filter: $\forall\, n\geq 0,$
\begin{subequations} \label{5.3}
\begin{align}
&\widehat {u^n}(k_1=0,k_2) = 0,\quad \widehat {u^n}(k_1,k_2=0) = 0;\label{5.3-0}\\
& \operatorname{Im} (\widehat{u^n}(k_1,k_2) )=0;\label{5.4-0}\\
& \widehat {u^n}(k_1,k_2) \leftarrow \frac 1 2(\widehat {u^n}(k_1,k_2) -\widehat {u^n}(-k_1,k_2)), \quad \forall k_1,k_2;\label{5.5-0}\\
& \widehat {u^n}(k_1,k_2) \leftarrow \frac 1 2(\widehat {u^n}(k_1,k_2) -\widehat {u^n}(k_1,-k_2)), \quad \forall k_1,k_2.\label{5.6-0}
\end{align}
\end{subequations}
In the actual numerical implementation, we apply first \eqref{5.5-0} and then  \eqref{5.6-0}.
In {\sc Matlab} implementation, we use the following command

\vskip .25cm
\noindent
{\texttt{for j = 1:Ny\\
    un\_hat(:,j) = [0;(u\_hat(2:end,j)-flip(u\_hat(2:end,j)))/2];\\
end\\
for i = 1:Nx\\
    u\_hat(i,:) = [0,(un\_hat(i,2:end)-flip(un\_hat(i,2:end)))/2];\\
end}
}

\vskip .25cm
It is observed from Figure \ref{fig:ac2d1} that without filter the symmetries are destroyed and $u$ tends to the incorrect steady state.
On the other hand, the symmetry can be preserved using this filter, see Figure \ref{fig:ac2d2}.
A comparison between the corresponding energy evolutions is provided in Figure \ref{fig:2DAC_3}.

\begin{figure}[!h]
\centering
\includegraphics[width=0.48\textwidth]{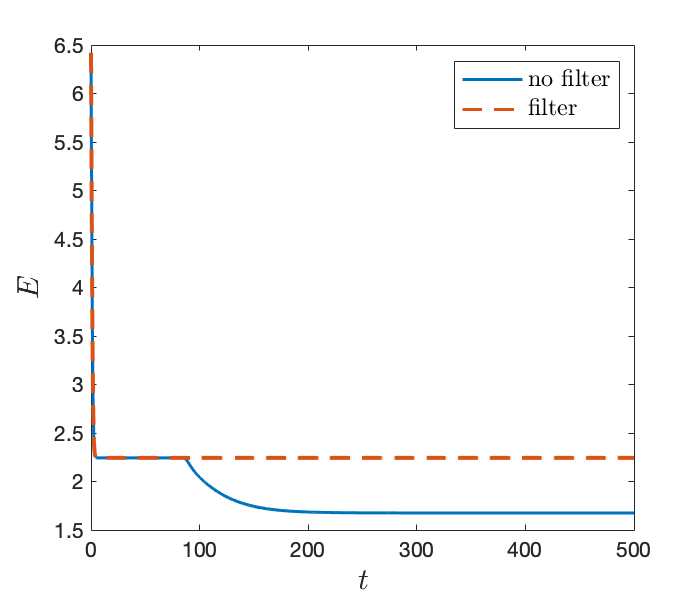}
\caption{\small Example \ref{exam4}: Energy evolutions of 2D Allen-Cahn equation computed by the first-order IMEX method  respectively without filter and with filter. }\label{fig:2DAC_3}
\end{figure}

\section{Concluding remarks}
In this work, we investigated the numerical computation and analysis for
steady states of the Allen-Cahn equation with
double well potential. Quite surprisingly we find that for very smooth odd initial
condition, the numerical solution computed using standard algorithms and very
high precision may lose parity in not very long time
simulations and eventually converge to the spurious steady state.  We call it
(numerical) breakdown of parity for Allen-Cahn as a manifestation of the gradual
accumulation of machine round-off errors. This new phenomenon shows that,
just much like other physical conservation laws, conservation of parity should be
ironed into the fundamental construction of numerical algorithms since the violation
of parity could lead to completely erroneous long time simulations.
To resolve this issue  we introduced a new parity-preserving filter technique
in order to give an accurate long time computation of the steady states. We developed a new theoretical framework taking
into account of perturbations introduced by machine errors. This opens doors to many future
directions and developments.

In future works, we shall develop
a new asymptotic theory and spectral analysis framework around the steady states.
We also plan to address the following important issues.

\begin{itemize}		
		\item General filtering/stabilizing technique for long time numerical computations. In our work we introduced a parity-preserving filter to deal with special data with odd symmetry or certain spectral band gaps. In the future we will develop a more systematic filtering procedure to suppress the spurious unstable modes in the long time simulations. In addition, we plan to develop
a complete theoretical framework to show the stability of filtered solutions under perturbations
by machine errors. This will include full numerical discretization as well as very realistic assumptions
on the machine errors and so on.

		\item  Other equations and phase field models. These include nonlocal Allen-Cahn equations driven by general polynomials or logarithmic nonlinearities, and also the Cahn-Hilliard equations, molecular beam epitaxy equation and so on.
	\end{itemize}

{\bf Acknowledgement.}
The research of T. Tang is partially supported by NSFC Grant 11731006, the NSFC/RGC Grant 11961160718.
The research of D. Li is supported in part by Hong Kong RGC grant GRF 16307317 and 16309518.
The research of W. Yang is supported by NSFC Grants 11801550 and 11871470.
The research of C. Quan is supported by NSFC Grant 11901281 and the Guangdong Basic and Applied Basic Research Foundation (2020A1515010336).


\bibliographystyle{plain}
\bibliography{AC_Num_v18_2.bib}

\end{document}